\documentclass[runningheads]{llncs}
\usepackage[T1]{fontenc}
\usepackage{graphicx}
\usepackage{xcolor}
\usepackage{amsmath,amssymb}
\usepackage{algorithm}
\usepackage{algorithmic}
\usepackage{booktabs}
\usepackage{mathtools}

\newtheorem{assumption}{Assumption}

\newcommand{\R}{\mathbb{R}}

\newcommand{\inner}[2]{\langle #1, #2\rangle}
\newcommand{\norm}[1]{\left\lVert #1\right\rVert}
\newcommand{\xs}{x^{\star}}
\newcommand{\Fh}{\widehat{F}}
\newcommand{\hloc}{\hat{L}_{\mathrm{loc}}}
\DeclareMathOperator*{\argmin}{arg\,min}

\title{Robust Decentralized Optimization under Node Failures via Adaptive Regularization\thanks{The research was supported by Russian Science Foundation (project No. 21-71-30005-$\Pi$), https://rscf.ru/en/project/21-71-30005/.}}
\author{Ilya Kuruzov\inst{1,2}\orcidID{0000-0002-2715-5489} \and Anastasiia Murzina \inst{3,4}\orcidID{0009-0009-8488-5769}
}
\authorrunning{I. Kuruzov et al.}
\institute{Moscow Institute of Physics and Technology, Moscow 117303, Russia\\ 
\and Innopolis University, Innopolis 420500, Russia\\
\and Sirius University of Science and Technology,  Sirius Federal Territory 354340, Russia\\
\and ITMO University,  St. Petersburg 197101, Russia \\
\email{kuruzov.ia@phystech.edu, murzinanastasiia@gmail.com}
}

\begin{document}

\maketitle              

\begin{abstract}
We study decentralized minimization of a sum of functions over a network where nodes may only leave, the remaining nodes stay connected, and the topology freezes between departures. Standard methods forget departed functions, causing a permanent bias proportional to the heterogeneity of the data. We propose Legacy Gradient Tracking (Legacy‑GT): before leaving, a node compresses its function into a gradient‑anchored quadratic legacy, bequeaths it to a neighbor, and provides a correction that exactly preserves the gradient‑tracking invariant. We prove that the optimal legacy curvature is the average of the strong‑convexity and smoothness constants, that legacies compose losslessly across chained departures, and that an adaptive anchor rule yields error bounds that decay geometrically after the last departure to a small residual—the minimum of the network’s optimization error at departure and its heterogeneity radius. In contrast, the classic drop‑and‑forget baseline suffers a bias that never decays. Numerical experiments confirm the theory.

\end{abstract}

\keywords{Decentralized Optimization  \and Fault-tolerant Control \and Regularization.} \\

\section*{Introduction}

Consider $n$ agents that cooperate over a communication network to solve
\begin{equation}\label{eq:problem}
\min_{x\in\R^d}\; F(x)\;=\;\sum_{i=1}^{n} f_i(x),
\end{equation}
where the function $f_i$ (a local loss over private data) is available only at node $i$. Decentralized first-order methods solve \eqref{eq:problem} without any coordinator by interleaving local gradient steps with gossip averaging. Decentralized subgradient descent converges only to a neighborhood of the optimum under a constant stepsize \cite{nedic2009}; exact linear rates for smooth, strongly convex objectives were later recovered by EXTRA \cite{shi2015extra} and by gradient-tracking schemes \cite{dimarco2016next,nedic2017diging,qu2018harnessing}, which extend to time-varying and directed graphs via push-sum \cite{nedic2015pushsum,nedic2017diging}, with stochastic variants and unified analyses given by \cite{pu2021distributed} and \cite{koloskova2020unified}. All of these methods, however, assume a \emph{fixed node set}: only the edges evolve over time, and convergence is always to the minimizer of the sum over that fixed set.

We focus on a ubiquitous form of time variation that these methods do not cover, \emph{monotone departure}, in which the node set itself shrinks and the optimization target drifts unless memory of departed nodes is explicitly preserved: \textbf{(N1)} $V^{t+1}\subseteq V^{t}\subseteq[n]$: nodes can only leave; \textbf{(N2)} every $G^t=(V^t,E^t)$ is connected, and $V^{t+1}=V^t\Rightarrow E^{t+1}=E^t$: the topology changes only when membership does. This captures sensors dying out, preempted cloud workers, cross-silo federations losing participants, and volunteer networks churning --- in every case \emph{after} the departing node has already contributed data-dependent computation.

\textbf{The forgetting problem.}
Every classical decentralized method restarted or continued on the survivors $S=V^{\infty}$ converges to $\xs_S=\argmin_x\sum_{i\in S}f_i(x)$: departed functions leave no trace in the fixed point. The resulting bias,
\begin{equation}\label{eq:dropbias}
\norm{\xs_S-\xs}\;=\;\Theta\!\Big(\tfrac{k\,\zeta}{n\,\mu}\Big),
\qquad \zeta=\max_i\norm{\nabla f_i(\xs)},
\end{equation}
is \emph{constant}: it does not shrink however late the $k$ nodes leave and however long the survivors iterate afterwards. Yet a node that leaves after the network has reached accuracy $\varepsilon$ has almost fully expressed its influence: its iterate is $\varepsilon$-close to $\xs$ and its local gradient there encodes exactly the pull it would have kept exerting. Discarding this information is wasteful, and --- as we show --- unnecessary. We next situate this gap relative to the adjacent literatures before describing our method and contributions.

A closely related line of work studies \emph{open} multi-agent systems, in which agents arrive and depart during execution: gossip and consensus under churn are analyzed in \cite{hendrickx2017open,franceschelli2021stability}, with fundamental performance limits derived by \cite{degalland2022fundamental}. Two papers on open-network \emph{optimization} are closest to ours. \cite{hendrickx2020stability} characterizes the sensitivity of the global minimizer to the removal of a single function, with tight bounds of order $O\big(\min\{\kappa^{1/2},\,\kappa/n^{1/2},\,\kappa^{3/2}/n\}\big)$ times the heterogeneity radius; we adopt this bound as a tighter drop-branch certificate inside our anchor rule \eqref{eq:anchor} (Remark~\ref{rem:hr}). \cite{hsieh2021open} proposes decentralized dual averaging that tracks the time-varying optimizer of the currently present sum --- optimally \emph{forgetting} absent nodes --- whereas we recover the \emph{original} full-sum minimizer $\argmin\sum_{i\in[n]}f_i$. Neither work transmits a function summary at departure, the mechanism that lets our bias decay with the departure round rather than settle at a heterogeneity-dependent floor.

In federated learning, FedAvg-style analyses handle \emph{recurring} partial participation, where every client returns often enough to keep shaping the limit \cite{mcmahan2017fedavg,li2020fedprox,karimireddy2020scaffold,yang2022anarchic}; permanent dropout is qualitatively different and reproduces the bias \eqref{eq:dropbias}. Two federated mechanisms anticipate pieces of our construction: the proximal anchor of FedProx \cite{li2020fedprox} (a quadratic penalty toward a reference point, used there for stability and here for memory) and the control variates of SCAFFOLD \cite{karimireddy2020scaffold} (stored stale gradients correcting for absent participants). Relatedly, SAG and SAGA \cite{schmidt2017sag,defazio2014saga} converge linearly using stale gradients evaluated at old iterates, showing that a first-order snapshot retains most of a function's optimization value near the optimum; our legacy is precisely such a frozen snapshot, augmented with a curvature term optimally tuned at $\eta_i=\frac{\mu_i+L_i}{2}$.

Personalized federated learning couples a local model to a reference point through the same quadratic penalty we use: pFedMe optimizes Moreau envelopes \cite{dinh2020pfedme}, L2GD mixes local and global models through an identical coupling \cite{hanzely2020federated}, and Ditto trades fairness against a global reference with the same term \cite{li2021ditto}. There, however, the penalty lets a \emph{present} client deviate from consensus; we use the identical object in reverse, to make an \emph{absent} client's pull persist, with a closed-form optimal weight rather than a tuned one (Lemma~\ref{lem:center}). Elastic Weight Consolidation \cite{kirkpatrick2017ewc} similarly protects past tasks with a Fisher-weighted quadratic penalty around previous parameters; our legacies instantiate the same principle in a fully analyzable convex setting, with the weight derived optimally rather than heuristically. Finally, machine unlearning \cite{cao2015towards,bourtoule2021machine} pursues the opposite goal of erasing a departed participant's influence; our bound on how much influence a first-order snapshot can preserve (Theorem~\ref{thm:lower}) doubles as a bound on how much such snapshots leak, of possible independent interest for unlearning audits. Taken together, no prior decentralized method preserves the influence of permanently departing nodes on the exact limit point with a quantified, departure-time-dependent bias, even though each ingredient we combine --- gradient tracking (GT), proximal anchors, stale gradients, curvature-weighted penalties --- is individually classical.

\textbf{Our approach: legacies instead of amnesia.}
Rather than treating a departure as erasure, we let the departing node compress its function into a \emph{legacy}: a quadratic surrogate
$q_i(x)=f_i(y_i)+\inner{\nabla f_i(y_i)}{x-y_i}+\frac{\eta_i}{2}\norm{x-y_i}^2$
anchored at a carefully chosen point $y_i$, transmitted in a single farewell message and adopted by one surviving neighbor. Four design choices make this simple idea provably work.
\emph{(i)~First-order anchoring:} the penalty carries the local \emph{gradient} at the anchor, not just the anchor point; without it the memory cannot encode the direction of $f_i$'s residual pull, and the achievable bias degrades by a factor of order $n$ (Sec.~\ref{sec:eta}).
\emph{(ii)~Optimal frozen curvature:} the weight $\eta_i=\frac{\mu_i+L_i}{2}$ is computed \emph{once}, at the departure round; we prove this midpoint is optimal and that a weight growing after departure would pin the limit to the anchor and destroy convergence.
\emph{(iii)~Adaptive anchor:} each node additionally runs a communication-free local descent on its own $f_i$ toward the local minimizer $x_i^{\ast}=\argmin f_i$; at departure, the anchor is chosen between the consensus iterate (best for \emph{late} exits, when it is near $\xs$) and this local track (best for \emph{early} exits, when it is the only meaningful summary of $f_i$).
\emph{(iv)~One heir, exact hand-off:} the legacy is inherited by a single neighbor together with a one-line tracker correction that preserves the gradient-tracking invariant exactly, so the running objective stays an ordinary decentralized sum and no restart is needed.
Table~\ref{tab:compare} previews the resulting error compensation.

\begin{table}
\caption{\textbf{All guarantees shown are for the strongly convex regime (SC).} Limit‑point error after $k$ departures for strongly convex objectives.
  Methods: classic drop (bias never decays), GT‑P (point‑only penalty),
  GT‑M (local‑minimizer anchor), and Legacy‑GT (adaptive anchor).
  $\varepsilon_m$, $\hat\varepsilon^{\mathrm{loc}}_m$ denote the network and
  local‑descent errors at departure $t_m$; $\hat\zeta_{i_m}$ is the
  certified heterogeneity.  Message size in $d$‑vectors; anchored variants
  carry one extra tracker. }
\label{tab:compare}
\centering
\begin{tabular}{|l|l|l|l|}
\hline
Method & No. of  msgs & Limit-point error & guar. \\
\hline
classic (drop) & --- & $\Theta\big(\frac{k\zeta}{n\mu}\big)$, \emph{never decays} & Eq.~\eqref{eq:dropbias} \\
GT-P & $1$v & $\lesssim \sum_m\min\{2\varepsilon_m,\frac{\hat\zeta_{i_m}}{(n-k)\mu}\}$ & Prop.~\ref{prop:pointonly} \\
GT-M & $2$v & $\le\frac{\kappa-1}{2n}\sum_m(\frac{\zeta}{\mu}+\hat\varepsilon^{\mathrm{loc}}_m)$ & Thm.~\ref{thm:main} \\
\textbf{Legacy-GT} & $3$v & $\le\frac{\kappa-1}{n}\sum_m\min\{\varepsilon_m,\frac{\zeta}{\mu}+\hat\varepsilon^{\mathrm{loc}}_m\}$ & Thm.~\ref{thm:main}, Cor.~\ref{cor:selective} \\
\hline
\end{tabular}
\end{table}

\textbf{Our Contribution.}\label{sec:contrib}
On the method side (Sec.~\ref{sec:algo}), Legacy-GT augments GT with a departure protocol in which the leaving node sends one farewell message to a single heir, and a closed-form tracker correction preserves the tracking invariant exactly (Prop.~\ref{prop:invariant}). The curvature $\eta_i^\star=\frac{\mu_i+L_i}{2}$ is proven optimal and composes losslessly across chained departures (Sec.~\ref{sec:eta}, Lemma~\ref{lem:center}, Prop.~\ref{prop:compose}), and pairs with an adaptive anchor --- consensus iterate or local track, whichever is certifiably closer --- for robustness to early exits.

On the theory side, Theorem~\ref{thm:main} (Sec.~\ref{sec:theory}) bounds the limit-point error, for strongly convex objectives, by the classic linear-rate term plus a legacy bias with prefactor $\frac{\kappa-1}{n}$ decaying geometrically in the departure times, unlike the non-decaying classic bias \eqref{eq:dropbias}. A point-only fallback (Sec.~\ref{sec:meta}) trails full gradient anchoring by $\Theta(n/(\kappa-1))$, and Theorem~\ref{thm:lower} shows the guarantee is near-optimal, within a factor of $\kappa$ of the worst-case lower bound $\Omega\big(\frac{(L-\mu)\varepsilon}{(n-1)\mu+L}\big)$ for any first-order-snapshot memory.

\section{Problem Statement and Assumptions}\label{sec:setup}
We use $\norm{\cdot}$ for the Euclidean norm, $[n]=\{1,\dots,n\}$, and $\bar x^t$ for the average iterate of the current survivors.

\begin{assumption}[General curvature class]\label{as:fun}
Each $f_i:\R^d\to\R$ is $b_i$-smooth with curvature bounded below by $a_i\le b_i$, i.e., $f_i-\frac{a_i}{2}\norm{\cdot}^2$ is convex and $f_i-\frac{b_i}{2}\norm{\cdot}^2$ is concave. We write $\omega_i:=\frac{b_i-a_i}{2}$ (the \emph{curvature spread}) and distinguish two regimes:
\begin{itemize}
\item[\textbf{(SC)}] \emph{strongly convex}: $a_i=\mu_i>0$, $b_i=L_i$; then $\mu=\min_i\mu_i$, $L=\max_i L_i$, $\kappa=L/\mu$; $\xs=\argmin F$ and $x_i^{\ast}=\argmin f_i$ are unique; heterogeneity $\norm{\nabla f_i(\xs)}\le\zeta$ (so $\norm{x_i^{\ast}-\xs}\le\zeta/\mu_i$); $R=\max_i\norm{x_i^0-\xs}$;
\item[\textbf{(C)}] \emph{convex}: $a_i=0$, $b_i=L_i$; the solution set $X^{\star}=\argmin F$ is nonempty but possibly unbounded, and only function values are meaningful.
\end{itemize}
\end{assumption}
Further, we define $l_{\max}=\max_i a_i.$
\begin{assumption}[Certified anchor errors]\label{as:anchors}
At its departure round $t_m$, node $i_m$ holds anchor candidates with \emph{certified} error bounds: $\varepsilon^{\mathrm{glob}}_m$ bounding the distance of its consensus iterate to the base method's current target, and (whenever $x_{i_m}^{\ast}$ exists) $\varepsilon^{\mathrm{loc}}_m$ bounding $\norm{u_{i_m}^{t_m}-x_{i_m}^{\ast}}$. 
\end{assumption}
These are abstract inputs to the general analysis; each regime of Assumption~\ref{as:fun} instantiates them concretely: geometrically decaying via the linear rate in (SC) (Proposition~\ref{fact:gt}), from the $O(1/t)$ value rate combined with quadratic growth in (C) (Proposition~\ref{prop:certC}).

\begin{assumption}[Network and departures]\label{as:net}
(N1)--(N2) hold. Departures occur at rounds $t_1<\dots<t_k$; node $i_m$ leaves at the end of round $t_m$, leaving survivors $S_m$, $|S_m|=n-m$, $S_0=[n]$; at least one node never leaves. Each epoch graph admits a doubly stochastic mixing matrix (e.g., Metropolis weights, computable from local degrees) with spectral gap at least $1-\lambda>0$.
\end{assumption}

Simultaneous departures execute the protocol independently toward surviving heirs (one exists while the graph is connected); the analysis orders them arbitrarily. \textbf{Abrupt crashes:} each node checkpoints $(y,g,\eta,z)$ to a buddy every $\tau$ rounds; a crash costs at most $\tau$ rounds of anchor staleness, a $(1-\theta)^{-\tau/2}$ constant for $\tau=O(1/\theta)$. \textbf{Load balancing:} heirs are chosen with fewest legacies, keeping $\hloc\le L(1+\ell_{\max})$ and the rate loss bounded. \textbf{Privacy:} the farewell message reveals one function value, one gradient, and one point --- strictly less than what unlearning audits assume adversaries already extract from trained iterates \cite{bourtoule2021machine}; if even this is too much, the point-only variant \eqref{eq:etat} applies.

\begin{assumption}[Graceful exit]\label{as:grace}
A departing node can send one final message to one neighbor of its choice. 
\end{assumption}

\section{The General Algorithm}\label{sec:algo}

Our goal is a protocol, executable within Assumptions~\ref{as:net}--\ref{as:grace}, whose limit point is provably close to the minimizer $\xs$ of the \emph{original} problem \eqref{eq:problem}, with an error that is dominated by \eqref{eq:dropbias} and \emph{vanishes as departures happen later}. The baseline is the \emph{classic} algorithm: GT run on the survivors with departed nodes removed and trackers restarted; it converges linearly to $\xs_{S_k}$, and the bias \eqref{eq:dropbias} is tight.

\subsection{Legacy surrogates and adaptive anchoring}
When node $i$ departs at round $t$, it freezes an anchor $y_i$, the gradient $g_i=\nabla h_i(y_i)$ of its working function $h_i$ ($f_i$ plus any inherited legacies), and a curvature $\eta_i$, forming the {legacy}
\begin{equation}\label{eq:legacy}
q_i(x)\;=\;h_i(y_i)+\inner{g_i}{x-y_i}+\tfrac{\eta_i}{2}\norm{x-y_i}^2 .
\end{equation}
The gradient term is essential: a pure quadratic ($g_i=0$) has zero gradient at the anchor while $f_i$ generally does not, so its bias converges to the heterogeneity $\zeta_i=\norm{\nabla f_i(\xs)}$ even as $y_i\to\xs$; with the gradient term, $\norm{\nabla q_i(x)-\nabla f_i(x)}\le\frac{L_i-\mu_i}{2}\norm{x-y_i}$ (Lemma~\ref{lem:center}) vanishes with anchor accuracy. The one case where a pure quadratic is (almost) exact is $y_i=x_i^\ast$, where $\nabla f_i(x_i^\ast)=0$ --- this defines the variant GT-M below. The legacy is handed to a single neighbor (the \emph{heir}), so after $m$ departures the running objective remains an ordinary decentralized sum,
\begin{equation}\label{eq:surrogate}
\Fh_m(x)=\sum_{j\in S_m} h_j(x)=\sum_{j\in S_m} f_j(x)+\sum_{\ell=1}^{m} q_{i_\ell}(x),\qquad x^{\star}_m=\argmin \Fh_m,\ \ \Fh_0=F,\ x^{\star}_0=\xs,
\end{equation}
and since $\eta_i\ge\mu_i$, every original index contributes at least $\mu$ of curvature, so $\Fh_m$ is $n\mu$-strongly convex for every $m$ (Lemma~\ref{lem:mu}).

Which point should the legacy remember? Each node tracks two candidate sequences: the consensus iterate $x_i^{t}\to x^{\star}_m\approx\xs$, with certified error $\hat\varepsilon(t)=C\sqrt{n}\,\bar R\,(1-\theta)^{t/2}$ (Proposition~\ref{fact:gt}), and the communication-free local track $u_i^{t+1}=u_i^{t}-\frac{1}{L_i}\nabla f_i(u_i^{t})\to x_i^{\ast}$, with certified error $\hat\varepsilon_i^{\mathrm{loc}}(t)=(1-\kappa_i^{-1})^{t}\norm{u_i^0-x_i^{\ast}}$. At the departure round $t_m$ --- and only then --- the node compares three certified bounds,
\begin{align*}
b^{\mathrm{cons}}_i&=\hat\varepsilon(t_m), &
b^{\mathrm{min}}_i&=\zeta/\mu_i+\hat\varepsilon^{\mathrm{loc}}_i(t_m), &
b^{\mathrm{drop}}_i&=\tfrac{1}{2\beta}\cdot\tfrac{\hat\zeta_i}{(n{-}m)\mu},
\end{align*}
where $\hat\zeta_i=\norm{\nabla f_i(x_i^{t_m})}+L_i\hat\varepsilon(t_m)\ge\zeta_i$ certifies the classic drop bias \eqref{eq:dropbias}, rescaled by $\tfrac{1}{2\beta}$ for comparability with the anchored bounds (Theorem~\ref{thm:main}), and picks
\begin{equation}\label{eq:anchor}
y_i=\begin{cases}
x_i^{t_m} & b^{\mathrm{cons}}_i \text{ is smallest},\\[2pt]
u_i^{t_m} & b^{\mathrm{min}}_i \text{ is smallest},\\[2pt]
\varnothing & b^{\mathrm{drop}}_i \text{ is smallest (no legacy)}.
\end{cases}
\end{equation}
Once chosen, $y_i$ is frozen forever, like the weight (Remark~\ref{rem:frozen}): a node leaving \emph{late} remembers where the collective was heading ($b^{\mathrm{cons}}$ decays geometrically and eventually wins); a node leaving \emph{early} remembers where it was heading itself, off by at most the heterogeneity radius.

\begin{remark}[When no anchor is best]\label{rem:noanchor}
Since $2\beta\,b^{\mathrm{min}}_i/b^{\mathrm{drop}}_i\approx\frac{\kappa_i-1}{2}$, a node with an ill-conditioned local function ($\kappa_i>3$) leaving early is better off contributing no legacy: an inaccurate anchor plus a large curvature mismatch injects more bias than forgetting. Rule \eqref{eq:anchor} detects this from observables and falls back to the classic bias term (Cor.~\ref{cor:selective}); late departures never trigger this branch, since $b^{\mathrm{cons}}\to0$ while $b^{\mathrm{drop}}$ stays bounded away from zero.
\end{remark}

\begin{remark}[Sharper drop certificates]\label{rem:hr}
The open-systems sensitivity bound of \cite{hendrickx2020stability} --- $O\big(\min\{\kappa^{1/2},\kappa/n^{1/2},\kappa^{3/2}/n\}\big)$ times the spread of local minimizers --- is a drop-in, strictly tighter replacement for $b^{\mathrm{drop}}_i$ in \eqref{eq:anchor} whenever $\kappa\gg1$, with no change to the protocol.
\end{remark}

\subsection{Legacy Gradient Tracking and its variants}
Algorithm~\ref{alg:lgt} gives the full method. The base method is adapt-then-combine GT: each survivor holds an iterate $x_j$ and a tracker $z_j$ estimating the average gradient of the current objective \eqref{eq:surrogate}. All variants differ only in the farewell message: \textbf{GT-C} (consensus anchor) sends $y_i=x_i^{t_m}$ with weight $\eta_i=\frac{\mu_i+L_i}{2}+\eta_i^{\mathrm{led}}$, and its bias decays geometrically in $t_m$; \textbf{GT-M} (minimizer anchor) sends $y_i=u_i^{t_m}$ with the same weight --- essentially a pure proximity penalty since $\nabla f_i(u_i^{t_m})\approx0$ --- and its bias $\le2\beta k(\zeta/\mu+\hat\varepsilon^{\mathrm{loc}}(t_1))$ is independent of how early the exit is (both by Thm.~\ref{thm:main}); \textbf{Legacy-GT} (adaptive) selects among GT-C, GT-M, and no legacy per departure by rule \eqref{eq:anchor}, so each node contributes the minimum of the three certified bounds (Thm.~\ref{thm:main}, Cor.~\ref{cor:selective}); \textbf{GT-P} (point-only; fallback for bandwidth/privacy-constrained exits) sends only $y_i=x_i^{t_m}$ and a frozen scalar weight \eqref{eq:etat} capped at $\eta^{(t_m)}_i\le\frac{c_0}{\alpha}$ (Prop.~\ref{prop:pointonly}).

\begin{figure*}[!t]
\begin{minipage}{\textwidth}
\hrule\vspace{3pt}
\caption{Legacy-GT (code for node $j$; round $t$)}
\label{alg:lgt}
\vspace{2pt}\hrule\vspace{3pt}
\begin{algorithmic}[1]
\REQUIRE stepsize $\alpha=\Theta\big(\tfrac{(1-\lambda)^2}{\hloc}\big)$; $h_j\!=\!f_j$; $z_j^0\!=\!\nabla h_j(x_j^0)$; local track $u_j^0=x_j^0$; curvature ledger $\eta_j^{\mathrm{led}}=0$.
\STATE \textit{// --- standard gradient-tracking round on current graph $G^t$ ---}
\STATE $x_j^{t+1}=\sum_{l\in N_j\cup\{j\}} w_{jl}^{t}\,(x_l^{t}-\alpha z_l^{t})$
        \COMMENT{mix a local descent step; $w^t$: Metropolis weights of $G^t$}
\STATE $z_j^{t+1}=\sum_{l} w_{jl}^{t} z_l^{t}+\nabla h_j(x_j^{t+1})-\nabla h_j(x_j^{t})$
        \COMMENT{tracker: preserves $\sum_j z_j=\sum_j\nabla h_j(x_j)$}
\STATE $u_j^{t+1}=u_j^{t}-\tfrac{1}{L_j}\nabla f_j(u_j^{t})$
        \COMMENT{auxiliary local run toward $x_j^{\ast}$; no communication}
\STATE \textit{// --- departure protocol: node $j$ leaves at end of round $t$ ---}
\IF{node $j$ departs now}
\STATE choose anchor $y_j\in\{x_j^{t},\,u_j^{t},\,\varnothing\}$ by rule \eqref{eq:anchor}, evaluated once at this round
        \COMMENT{late $\Rightarrow$ consensus iterate; early, well-cond.\ $\Rightarrow$ local track; early, ill-cond.\ $\Rightarrow$ no legacy (Rem.~\ref{rem:noanchor})}
\STATE \textbf{if} $y_j\neq\varnothing$: $g_j\leftarrow\nabla h_j(y_j)$;\quad
       $\eta_j\leftarrow\tfrac{a_j+b_j}{2}+\eta_j^{\mathrm{led}}$
        \COMMENT{midpoint of $[a_j,b_j]$ (Lem.~\ref{lem:center}): $\tfrac{\mu_j+L_j}{2}$ in (SC), $\tfrac{L_j}{2}$ in (C); $+$ inherited curvature (Prop.~\ref{prop:compose}); frozen from now on}
\STATE pick heir $a\in N_j$ with fewest legacies; send $(y_j,g_j,\eta_j,z_j^{t})$, or $(\varnothing,z_j^{t},\nabla h_j(x_j^{t}))$ if no legacy
        \COMMENT{one message: at most $3$ vectors $+1$ scalar; load balancing keeps $\hloc$ small}
\ENDIF
\IF{node $j$ receives $(y_i,g_i,\eta_i,z_i)$ from departing $i$}
\STATE $h_j\leftarrow h_j+q_i$ with $q_i$ from \eqref{eq:legacy};\quad
       $\eta_j^{\mathrm{led}}\!\leftarrow\!\eta_j^{\mathrm{led}}\!+\eta_i$
        \COMMENT{inherit the memory of $f_i$; $y_i,g_i,\eta_i$ are constants of $q_i$ forever}
\STATE $z_j\leftarrow z_j+z_i+\eta_i\,(x_j^{t}-y_i)$
        \COMMENT{exact tracker correction (Prop.~\ref{prop:invariant}); no restart needed}
\ENDIF
\IF{node $j$ receives $(\varnothing,z_i,\nabla h_i(x_i^{t}))$ from departing $i$}
\STATE $z_j\leftarrow z_j+z_i-\nabla h_i(x_i^{t})$
        \COMMENT{no-legacy hand-off: restores the tracking invariant on the survivors}
\ENDIF
\STATE all survivors adopt Metropolis weights of $G^{t+1}$
\end{algorithmic}
\vspace{2pt}\hrule
\end{minipage}
\end{figure*}

\begin{proposition}[Exact invariant preservation]\label{prop:invariant}
Line 13 of Algorithm~\ref{alg:lgt} preserves $\sum_{j\in S}z_j=\sum_{j\in S}\nabla h_j(x_j)$ exactly across a departure.
\end{proposition}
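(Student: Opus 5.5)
The plan is a direct bookkeeping argument: compare $\sum_j z_j$ and $\sum_j \nabla h_j(x_j)$ over the survivor set immediately before and immediately after the hand-off at round $t$. I assume the invariant $\sum_{j\in S_{m-1}} z_j^t=\sum_{j\in S_{m-1}}\nabla h_j(x_j^t)$ holds on the pre-departure set $S_{m-1}$. This is legitimate because it holds at initialization ($z_j^0=\nabla h_j(x_j^0)$), and the GT update in line 3 preserves it: double stochasticity of $w^t$ gives $\sum_j\sum_l w^t_{jl}z_l^t=\sum_l z_l^t$, and the gradient differences telescope. Let $i$ be the departing node and $a$ its heir, so $S_m=S_{m-1}\setminus\{i\}$. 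The departure changes only three quantities: $z_a$, $h_a$, and the index set. All iterates $x_j^t$ are untouched.

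The first step evaluates the new left-hand side. After line 13,
\[
\sum_{j\in S_m} z_j^{\mathrm{new}}=\sum_{j\in S_{m-1}} z_j^t+\eta_i\,(x_a^t-y_i),
\]
since $z_i^t$ is removed from the set but re-added through $z_a$. By the invariant on $S_{m-1}$, this equals $\sum_{j\in S_{m-1}}\nabla h_j(x_j^t)+\eta_i(x_a^t-y_i)$.

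The second step evaluates the new right-hand side. With $h_a^{\mathrm{new}}=h_a+q_i$ and $\nabla q_i(x)=g_i+\eta_i(x-y_i)$ from \eqref{eq:legacy},
\[
\sum_{j\in S_m}\nabla h^{\mathrm{new}}_j(x_j^t)=\sum_{j\in S_m}\nabla h_j(x_j^t)+g_i+\eta_i(x_a^t-y_i).
\]
The third step equates the two sides. Exactness requires $g_i=\nabla h_i(x_i^t)$, the departed node's term in the old sum. But $g_i=\nabla h_i(y_i)$ by construction, so the identity is exact only when $y_i=x_i^t$, which is the consensus anchor (GT-C branch).

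This last point is the main obstacle. When the anchor is $u_i^t\neq x_i^t$, the sums differ by $\nabla h_i(x_i^t)-\nabla h_i(y_i)$. I would handle it in one of two ways. The first option is to read the message as also carrying the discrepancy, with line 13 interpreted as $z_a\leftarrow z_a+z_i-\nabla h_i(x_i^t)+\nabla q_i(x_a^t)$, which reduces to the stated line precisely when $g_i=\nabla h_i(x_i^t)$. The second option is to note that in the GT-C case the identity is literal, and to state the general case with this extra term. I would present the GT-C computation as the proof and remark on the required correction for the other anchors.

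Finally, the no-legacy branch (line 16) follows by the same computation without $q_i$. Removing $z_i$ and $\nabla h_i(x_i^t)$ from both sums and adding $z_i-\nabla h_i(x_i^t)$ back to $z_a$ preserves equality exactly. The subsequent GT rounds on $G^{t+1}$ then preserve the invariant by the same double-stochasticity argument used above.
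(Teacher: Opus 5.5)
Your bookkeeping is the same as the paper's proof. The paper writes the exact correction as $z_a+z_i-\nabla h_i(x_i^t)+\nabla q_i(x_a^t)$ and notes that it collapses to line 13 because $g_i=\nabla h_i(y_i)=\nabla h_i(x_i^t)$ when $y_i=x_i^t$. Your caveat about the local-track anchor is correct, and the paper covers that case only by saying $g_i$ ``substitutes for'' $\nabla h_i(x_i^t)$, which does not make line 13 literally exact; your restricted statement is therefore the accurate one. The restriction can also be removed at no extra bandwidth: the leaver sends $z_i-\nabla h_i(x_i^t)+g_i$ in place of $z_i$.
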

\begin{proof}
Removing $i$ deletes $z_i$ and $\nabla h_i(x_i)$ from the two sides of the invariant, and the heir must now carry $\nabla q_i(x_a)$ instead; the correction $z_a\!\leftarrow\! z_a+z_i-\nabla h_i(x_i^t)+\nabla q_i(x_a^t)$ restores the identity. Since $\nabla q_i(y_i)=g_i=\nabla h_i(y_i)=\nabla h_i(x_i^t)$ when $y_i=x_i^t$ (and $g_i$ substitutes for $\nabla h_i(x_i^t)$ otherwise, App.~\ref{app:sc}), it equals $z_a+z_i+\eta_i(x_a^t-y_i)$.
\end{proof}

\begin{remark}[Why a single heir]
If every survivor added a copy of the penalty, objective \eqref{eq:surrogate} would contain $|S^t|$ copies whose weights must be revised at every subsequent departure --- a silent re-weighting hazard. A single heir keeps the objective an unweighted sum, so any off-the-shelf decentralized method applies verbatim; Prop.~\ref{prop:compose} handles the heir's own eventual departure.
\end{remark}

\section{General Analysis and Main Theoretical Results}\label{sec:eta}

Every guarantee in this paper is an instantiation of one principle: \emph{any convergence guarantee for the surrogate $\Fh_k$ transfers to the original $F$ at the price of the residual terms \eqref{eq:gradtransfer}--\eqref{eq:valtransfer}, and the price is small exactly when the anchors are accurate.} The two regimes select the applicable clause: (SC) uses \eqref{eq:gradtransfer} at $x=\xs$ together with the strong convexity of $\Fh_k$ to bound \emph{distances} (Sec.~\ref{sec:theory}); (C) uses \eqref{eq:valtransfer} together with the strong convexity \emph{contributed by the legacies themselves} to bound \emph{values} (Sec.~\ref{sec:cvx}).

\subsection{The point-only variant and the schedule $\eta^t_i$.}
Suppose bandwidth or privacy constraints allow the farewell message to carry only the point $y=x^{t_m}_i$, with penalty $\frac{\eta^{(t_m)}_i}{2}\norm{x-y}^2$ (the squared form; a plain norm penalty destroys smoothness and acts as an exact penalty pinning the solution once the weight crosses a threshold). In App.~\ref{app:optimal_curvature} we show the bias of the resulting limit obeys
$\frac{\eta\varepsilon_t+\zeta_i}{(n-1)\mu+\eta}$
with $\varepsilon_t=\norm{x^t_i-\xs}$, that it improves on drop-and-forget \emph{iff} $\varepsilon_t<\zeta_i/((n-1)\mu)$, and that balancing the numerator yields the certified geometric schedule
\begin{equation}\label{eq:etat}
 \eta^{(t_m)}_i\;=\;\frac{\norm{\nabla f_i(x^{t_m}_i)}}{\hat\varepsilon(t_m)}
\;=\;\frac{\norm{\nabla f_i(x^{t_m}_i)}}{C\sqrt{n}\,\bar R}\,(1-\theta)^{-t_m/2}
\end{equation}
with resulting per-node bias $\le 2\hat\varepsilon(t_m)$. Gradient anchoring (one extra vector) improves this to $\frac{\kappa-1}{2n}\hat\varepsilon(t_m)$ --- a factor $\approx\frac{4n}{\kappa-1}$ --- and needs no error certificate; we therefore treat \eqref{eq:etat} as the fallback for bandwidth- or privacy-constrained exits.

\begin{proposition}[GT-P: rate and bias, regime (SC); proof in App.~\ref{app:optimal_curvature}]\label{prop:pointonly}
Run Algorithm~\ref{alg:lgt} with point-only legacies $\frac{\eta^{(t_m)}_i}{2}\norm{x-x^{t_m}_i}^2$, weights \eqref{eq:etat} capped at $\eta^{(t_m)}_i\le\frac{c_0}{\alpha}$ ($c_0$ an absolute constant). Then the iterates converge \emph{linearly}, at the rate of Proposition~\ref{fact:gt} with $\hloc$ replaced by $\hloc+\frac{c_0}{\alpha}$, to a point $\hat x$ with
\[
\norm{\hat x-\xs}\ \le\ \sum_{m=1}^{k}\min\Big\{2\hat\varepsilon(t_m),\ \tfrac{\hat\zeta_{i_m}}{(n-k)\mu}\Big\}\cdot(1+O(k\beta)) .
\]
The cap is not cosmetic: an uncapped weight eventually exceeds the smoothness budget of any fixed stepsize (Sec.~\ref{sec:exp} exhibits the divergence), while the cap induces the bias floor $\frac{\zeta_i}{(n-1)\mu+c_0/\alpha}$ for very late departures --- a dilemma the gradient-anchored legacy avoids entirely, since its weight never exceeds $L_i(1+\ell_{\max})$.
\end{proposition}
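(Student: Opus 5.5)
The argument splits into a rate part and a bias part. Both rest on the observation that after the last departure Algorithm~\ref{alg:lgt} is plain gradient tracking on a fixed graph applied to the surrogate $\Fh_k=\sum_{j\in S_k}f_j+\sum_m \frac{\eta^{(t_m)}_{i_m}}{2}\norm{x-x^{t_m}_{i_m}}^2$.

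\textbf{Rate.} The tracking invariant is preserved across every departure: for point-only legacies this is the computation of Prop.~\ref{prop:invariant} with $g_i$ replaced by the appropriate term, and in the no-legacy branch by line~16. Hence no restart is needed. The heir's working function $h_a$ is $(L_a+\sum\eta)$-smooth and at least $\mu$-strongly convex. With the cap $\eta^{(t_m)}_i\le c_0/\alpha$, the local smoothness is at most $\hloc+c_0/\alpha$. I then invoke Proposition~\ref{fact:gt} with this constant. The stepsize condition $\alpha=\Theta((1-\lambda)^2/\hloc)$ still holds up to an absolute factor once $c_0$ is small, because $\alpha\cdot c_0/\alpha=c_0$. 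This gives linear convergence to $\hat x=x_k^\star$.

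\textbf{Bias.} I compare $\hat x$ with $\xs$ through the first-order optimality of $\Fh_k$, which is at least $(n-k)\mu$-strongly convex, and larger when the weights count. Since $\nabla F(\xs)=0$,
\[
\nabla\Fh_k(\xs)=\sum_m\Big(\eta^{(t_m)}_{i_m}(\xs-x^{t_m}_{i_m})-\nabla f_{i_m}(\xs)\Big).
\]
This splits the error into per-departure contributions. I bound each one in two ways.

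\emph{First bound.} Strong convexity of the single-node surrogate gives the contribution $\frac{\eta\varepsilon_t+\zeta_i}{(n-1)\mu+\eta}$ (App.~\ref{app:optimal_curvature}). I insert \eqref{eq:etat} and use $\varepsilon_t\le\hat\varepsilon(t_m)$. I also use $\zeta_i\le\norm{\nabla f_i(x_i^{t_m})}+L_i\hat\varepsilon(t_m)$, which equals $\eta\hat\varepsilon+L_i\hat\varepsilon$. The numerator is then $\le 2\eta\hat\varepsilon+L_i\hat\varepsilon$ and the denominator is $\ge\eta+(n-1)\mu$. The $L_i\hat\varepsilon$ term is absorbed by the $(n-1)\mu$ part of the denominator (or into the $O(k\beta)$ factor), which yields $\le 2\hat\varepsilon(t_m)$.

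\emph{Second bound.} Since $\eta\ge0$, the legacy only helps relative to dropping the node. So each contribution is also $\le\hat\zeta_{i_m}/((n-k)\mu)$, by the drop bias \eqref{eq:dropbias} with $\hat\zeta$ certifying $\zeta$.

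\textbf{Main obstacle: composing the $k$ terms.} The per-node bounds are derived for a single departure from the full-sum optimum, while the actual target has already drifted to $x^\star_{m-1}$. I would telescope $\norm{x^\star_k-\xs}\le\sum_m\norm{x^\star_m-x^\star_{m-1}}$. Each step is a single-departure perturbation of a strongly convex sum whose optimum differs from $\xs$ by the accumulated earlier bias. The anchor error relative to $x^\star_{m-1}$ then picks up the earlier errors, multiplied by a ratio of order $\eta/((n-m)\mu)$, which is where the $\beta$ factors come from. Summing this geometric-type recursion gives $(1+O(k\beta))$, provided $k\beta$ is small.

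Two further points need care. First, the certificates $\hat\varepsilon(t_m)$ measure distance to the current target, not to $\xs$; this mismatch also has to be absorbed into the $(1+O(k\beta))$ factor. Second, each term must be allowed to take the minimum of the two bounds separately. This is consistent because the per-step perturbation bound is linear in each node's contribution.
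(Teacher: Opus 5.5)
Your rate argument matches the paper's: heir smoothness $\hloc+\eta$, with the cap keeping a fixed $\alpha$ admissible. The bias argument, however, has a genuine gap in its second branch. The claim that, because $\eta\ge0$, a point-only legacy can only help relative to dropping is false. In the single-departure bound you use, $\eta\mapsto\frac{\eta\varepsilon_t+\zeta_i}{(n-1)\mu+\eta}$ is increasing whenever $\varepsilon_t>\frac{\zeta_i}{(n-1)\mu}$, and this is not slack in the bound. Take survivors with aggregate $\frac{(n-1)\mu}{2}\norm{x-b}^2$, a departing $f_i=\frac{\mu}{2}\norm{x+(n-1)b}^2$ (so $\xs=0$), and an anchor $y=sb$ with $s>1$. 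The surrogate minimizer is $\frac{(n-1)\mu+s\eta}{(n-1)\mu+\eta}\,b$, strictly farther from $\xs$ than the drop minimizer $b$ for every $\eta>0$. Meanwhile \eqref{eq:etat} gives $\eta>0$ whenever $\nabla f_i(y)\neq0$. So the $\min$ in the display is not a property of the legacy; it comes from a choice. The paper's proof takes the drop branch only ``if the weight is set to zero'': the departing node takes the no-legacy option (as in rule \eqref{eq:anchor} and Corollary~\ref{cor:selective}) when its certified drop bound is smaller. That branch is then bounded on its own surrogate, whose modulus is at least $(n-k)\mu$. Linearity of the perturbation bound does not replace this selection step.

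Two further steps need repair. First, your first branch substitutes $\norm{\nabla f_i(x_i^{t_m})}=\eta\,\hat\varepsilon(t_m)$. This fails exactly when the cap binds, i.e., for late departures with $\norm{\nabla f_i(y)}/\hat\varepsilon(t_m)>c_0/\alpha$. There the numerator is dominated by $\zeta_i$, and you only get the floor $\frac{\zeta_i}{(n-1)\mu+c_0/\alpha}$. The paper splits cases: $2\hat\varepsilon(t_m)$ only when \eqref{eq:etat} is feasible within the cap, the capped floor otherwise. So the display can only be proved in the uncapped regime, as the statement's last sentence concedes.

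Second, your compounding attributes the $\beta$ factors to a ratio of order $\eta/((n-m)\mu)$. Point-only weights are not controlled by $\beta$; they may reach $c_0/\alpha$. If earlier drift enters through the penalty term, the per-step multiplier is $\eta_m/((n-m)\mu+\eta_m)$, which is close to $1$, so the recursion does not close to $1+O(k\beta)$. The clean route uses the fact that $\hat\varepsilon(t_m)$ already certifies $\rho_m=\norm{y_m-x^{\star}_{m-1}}$. Then $\norm{x^{\star}_m-x^{\star}_{m-1}}\le\frac{\eta_m\rho_m+\norm{\nabla f_{i_m}(y_m)}+L\rho_m}{(n-m)\mu+\eta_m}\le2\hat\varepsilon(t_m)$ whenever $L\le2(n-k)\mu$. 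In the drop branch, $\norm{\nabla f_{i_m}(x^{\star}_{m-1})}\le\hat\zeta_{i_m}$ holds directly. The telescoping sum then needs no compounding through $\eta$ at all.
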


\begin{remark}[The weight is frozen at departure]\label{rem:frozen}
In \eqref{eq:etat} --- and equally in the anchor rule \eqref{eq:anchor} --- the superscript $(t_m)$ indexes the departure round: both the anchor $y_i$ and the weight $\eta^{(t_m)}_i$ are evaluated once, when node $i$ leaves, and the legacy keeps these constants for all subsequent rounds. Time-dependence across different departures is essential (later exits warrant stiffer penalties because the anchor is more accurate), but a weight that continued to grow after the departure would drive the surrogate toward the hard constraint $x=y$, pinning the limit to the anchor, discarding the survivors' information about $f_i$, and inflating the heir's smoothness constant without bound --- destroying the linear rate. The same convention applies to the gradient-anchored legacy, whose weight $\eta_i=\frac{\mu_i+L_i}{2}+\eta_i^{\mathrm{led}}$ does not depend on the departure round at all.
\end{remark}

\subsection{Case I: Strongly Convex Objectives}\label{sec:theory}
Throughout this section regime (SC) of Assumption~\ref{as:fun} is in force; the certified errors of Assumption~\ref{as:anchors} instantiate as $\varepsilon^{\mathrm{glob}}_m=\hat\varepsilon(t_m)$ and $\varepsilon^{\mathrm{loc}}_m=\hat\varepsilon^{\mathrm{loc}}(t_m)$ below.
Throughout, $\beta:=\frac{L-\mu}{2n\mu}=\frac{\kappa-1}{2n}$ (a dimensionless per-departure contraction) and $\theta=c\,\frac{(1-\lambda)^2}{\hat\kappa}$ is the gradient-tracking contraction with $\hat\kappa\le\kappa(1+\ell_{\max})$, $\ell_{\max}$ the maximum number of legacies per node.

\begin{proposition}[Gradient tracking, {\cite{nedic2017diging,qu2018harnessing}}]\label{fact:gt}
During any epoch (fixed graph, fixed objective $\Fh_m$) there is a Lyapunov function $\Psi^t$ combining optimality, consensus and tracking errors with $\Psi^{t+1}\le(1-\theta)\Psi^t$ and $\max_j\norm{x_j^t-x^{\star}_m}\le C_\Psi\sqrt{\Psi^t}$, provided $\alpha=\Theta\big(\frac{(1-\lambda)^2}{\hloc}\big)$.
\end{proposition}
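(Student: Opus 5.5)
The proof follows the standard linear-convergence argument for adapt-then-combine gradient tracking on a fixed graph and fixed objective, as in \cite{nedic2017diging,qu2018harnessing}. Only the constants need to be re-derived for $\Fh_m$.

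Within an epoch, the graph $G^t$ and the objective $\Fh_m=\sum_{j\in S_m}h_j$ are fixed. I first record the properties of the summands. By Lemma~\ref{lem:mu}, $\Fh_m$ is $n\mu$-strongly convex, so its average over the $|S_m|$ survivors is $\frac{n\mu}{|S_m|}\ge\mu$-strongly convex. Each $h_j$ is $f_j$ plus at most $\ell_{\max}$ quadratic legacies, each with curvature at most $L(1+\ell_{\max})$ after ledger accumulation. Hence each $h_j$ is $\hloc$-smooth with $\hloc\le L(1+\ell_{\max})$, and the effective condition number is $\hat\kappa=\hloc/\mu\le\kappa(1+\ell_{\max})$.

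Next, I write the iteration in stacked form. Let $x=(x_j)$, $z=(z_j)$, and let $W$ be the doubly stochastic Metropolis matrix, with $\norm{W-\frac1{|S|}\mathbf 1\mathbf 1^\top}\le\lambda$. By Proposition~\ref{prop:invariant}, the tracking invariant $\bar z^t=\frac1{|S|}\sum_j\nabla h_j(x_j^t)$ holds at the start of the epoch, and the update of line 3 propagates it. I then define the three error quantities:
\[
a^t=\norm{\bar x^t-x^\star_m},\qquad b^t=\norm{x^t-\mathbf 1\bar x^t},\qquad c^t=\norm{z^t-\mathbf 1\bar z^t}.
\]
The goal is a $3\times3$ linear inequality system $(a,b,c)^{t+1}\le M(\alpha)\,(a,b,c)^t$ with the following entries.

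\textbf{Optimality error.} The averaged iterate satisfies $\bar x^{t+1}=\bar x^t-\alpha\bar z^t$. Comparing $\bar z^t$ with $\nabla\Fh_m(\bar x^t)/|S|$ costs $\hloc\,b^t/\sqrt{|S|}$. Strong convexity and smoothness then give $a^{t+1}\le(1-\alpha\mu)a^t+\alpha\hloc\, b^t/\sqrt{|S|}$ for $\alpha\le1/\hloc$.

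\textbf{Consensus error.} Mixing contracts the disagreement, giving $b^{t+1}\le\lambda(b^t+\alpha c^t)$.

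\textbf{Tracking error.} The tracker satisfies $c^{t+1}\le\lambda c^t+\hloc\norm{x^{t+1}-x^t}$. I bound $\norm{x^{t+1}-x^t}\le 2b^t+\alpha c^t+\alpha\sqrt{|S|}\hloc(a^t+b^t/\sqrt{|S|})$.

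I take $\Psi^t=a^2+\gamma_1 b^2+\gamma_2 c^2$, or equivalently a weighted norm built from the Perron vector of $M$. With $\alpha=c'\frac{(1-\lambda)^2}{\hloc}$, I choose weights $\gamma_1,\gamma_2$ scaling like $\frac{\alpha\hloc}{1-\lambda}$ and $\frac{\alpha^2}{(1-\lambda)^2}$. This makes the off-diagonal couplings absorbable, so that $\rho(M)\le 1-\frac{\alpha\mu}{2}$. That yields $\Psi^{t+1}\le(1-\theta)\Psi^t$ with $\theta=c\frac{(1-\lambda)^2}{\hat\kappa}$.

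The main obstacle is this entry-tracking step: verifying that the off-diagonal products, e.g.\ $\alpha\hloc\cdot\lambda\alpha\cdot\hloc/(1-\lambda)^2$, are dominated by $\alpha\mu(1-\lambda)^2$. This is what forces $\alpha\propto(1-\lambda)^2/\hloc$. I would check it with the $2\times2$ principal-minor (Gershgorin-type) criterion, as in \cite{pu2021distributed}.

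Finally, the readout bound is immediate:
\[
\norm{x_j^t-x^\star_m}\le a^t+b^t\le C_\Psi\sqrt{\Psi^t},
\]
with $C_\Psi$ depending on $\gamma_1^{-1/2}$. The epoch structure requires no additional argument: the rate constants depend only on $\mu$, $\hloc$ and $\lambda$, all of which are uniform across epochs.
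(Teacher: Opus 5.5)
Your plan is the small-gain argument the paper relies on. The paper gives no derivation: it cites the standard gradient-tracking analysis with $\Psi=D+c_1V_x+c_2\alpha^2V_z$ and reads off $\norm{x_j-x^\star_m}^2\le 2D+2V_x\le C_\Psi^2\Psi$. Your entries of the $3\times3$ system are correct. The gap is the step you flag as the main obstacle: for this $M$, $\rho(M)\le 1-\frac{\alpha\mu}{2}$ is false when $\alpha=c'(1-\lambda)^2/\hloc$ with an absolute $c'$. Since $M$ is nonnegative, $\rho(M)<s$ iff all leading principal minors of $sI-M$ are positive. The last minor requires
\[
(s-1+\alpha\mu)\big[(s-\lambda)(s-\lambda-\alpha\hloc)-\lambda\alpha\hloc(2+\alpha\hloc)\big]\;>\;M_{12}M_{23}M_{31}\;=\;\lambda\alpha^3\hloc^3 ,
\]
where the $\sqrt{|S|}$ factors cancel along the cycle $a\to c\to b\to a$. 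At $s=1-\frac{\alpha\mu}{2}$ the left side is at most $\frac{\alpha\mu}{2}(1-\lambda)^2$, so you need $\alpha^2\hloc^2<\frac{(1-\lambda)^2}{2\lambda\hat\kappa}$. With your stepsize this reads $2\lambda c'^2(1-\lambda)^2\hat\kappa<1$, which fails, e.g., for $\lambda=\frac12$ and large $\hat\kappa$; even $\rho(M)<1$ fails there. Your system therefore only certifies $\alpha\lesssim\min\{(1-\lambda)^2,(1-\lambda)/\sqrt{\lambda\hat\kappa}\}/\hloc$. In the regime $(1-\lambda)\sqrt{\lambda\hat\kappa}\gtrsim1$ that gives a rate of order $(1-\lambda)\hat\kappa^{-3/2}$, not $\theta=c(1-\lambda)^2/\hat\kappa$. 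Your displayed check, read literally, is also off: it would force $\alpha\lesssim\mu(1-\lambda)^4/(\lambda\hloc^2)$, which is again $\kappa$-dependent and does not yield $\alpha\propto(1-\lambda)^2/\hloc$.

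The lossy entry is $M_{31}=\alpha\sqrt{|S|}\hloc^2$. You bound $\norm{\bar z^t}$ by $\hloc a^t$ and then must pay for it with the weak $\alpha\mu$ contraction of $a$. The repair is to keep the gradient norm as a resource. Let $g=\Fh_m/|S|$ and $E=g(\bar x)-\min g$. For $\alpha\hloc\le\frac14$, the inexact descent lemma plus the PL inequality give
\[
E^{t+1}\le\big(1-\tfrac{\alpha\mu}{4}\big)E^t-\tfrac{\alpha}{8}\norm{\nabla g(\bar x^t)}^2+\tfrac{3\alpha\hloc^2}{4|S|}V_x^t .
\]
In the tracking recursion, use $|S|\norm{\bar z^t}^2\le 2|S|\norm{\nabla g(\bar x^t)}^2+2\hloc^2V_x^t$. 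Absorb the first part with the $-\frac{\alpha}{8}\norm{\nabla g}^2$ term rather than with $\alpha\mu E$. Then take $\Psi=|S|E+c_1V_x+c_2\alpha^2V_z$ with $\alpha=\tau(1-\lambda)^2/\hloc$, $c_1\asymp\tau(1-\lambda)\hloc$ and $c_2\asymp c_1/(1-\lambda)^2$. This closes for an absolute $\tau$ and gives $\theta=\frac{\alpha\mu}{4}$, with readout $\norm{x_j-x^\star_m}^2\le\frac{4}{\mu}E+2V_x\le C_\Psi^2\Psi$.
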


\begin{theorem}[Strongly convex case; proof in App.~\ref{app:sc}]\label{thm:main}
Let Assumptions \ref{as:fun}(SC), \ref{as:net}, \ref{as:grace} hold, run Alg.~\ref{alg:lgt} with the stepsize of Proposition~\ref{fact:gt}, and suppose $k\beta\le\frac12$ (i.e., $k(\kappa-1)\le n$). Then for every $T\ge t_k$ and every surviving node $j$,
\begin{equation}\label{eq:main}
\norm{x_j^{T}-\xs}\;\le\;\underbrace{C\,(1-\theta)^{\frac{T-t_k}{2}}\,\bar R_k}_{\text{optimization to }x^{\star}_k}
\;+\;\underbrace{2\beta\sum_{m=1}^{k}\rho_m}_{\text{legacy bias}},
\end{equation}
where $\rho_m=\norm{y_{i_m}-x^{\star}_{m-1}}$ satisfies, under the adaptive anchor \eqref{eq:anchor},
\begin{equation}\label{eq:rho}
\rho_m\;\le\;\min\Big\{\hat\varepsilon(t_m),\ \tfrac{\zeta}{\mu}+\hat\varepsilon^{\mathrm{loc}}(t_m)\Big\}+2\beta\!\!\sum_{m'<m}\!\!\rho_{m'},
\end{equation}
$\hat\varepsilon(t)=C\sqrt{n}\bar R(1-\theta)^{t/2}$, $\hat\varepsilon^{\mathrm{loc}}(t)=(1-\kappa^{-1})^{t}\bar R$, and $\bar R_k\le C'(\bar R+\sum_m\rho_m)$. In particular, if consecutive departures are separated by $\Delta^{\star}=\frac{2}{\theta}\log(4C_JC_\Psi^2)$ rounds, then
\begin{multline}\label{eq:headline}
\norm{x_j^{T}-\xs}\le C(1-\theta)^{\frac{T-t_k}{2}}\bar R\\
+\tfrac{2C(\kappa-1)}{n}\min\Big[\sqrt{n}\bar R(1-\theta)^{\frac{t_1}{2}}\!;\! k\big(\tfrac{\zeta}{\mu}+\bar R(1-\kappa^{-1})^{t_1}\big)\Big].
\end{multline}
\end{theorem}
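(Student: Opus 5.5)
The plan is to split the error as $\norm{x_j^T-\xs}\le\norm{x_j^T-x^{\star}_k}+\norm{x^{\star}_k-\xs}$ and treat the two terms separately. The first term is pure optimization within the last epoch. By Proposition~\ref{prop:invariant} the tracking invariant holds exactly after every hand-off, and by Lemma~\ref{lem:mu} the surrogate $\Fh_k$ is $n\mu$-strongly convex. The heir's smoothness is at most $L(1+\ell_{\max})$. Hence Proposition~\ref{fact:gt} applies on $[t_k,T]$ and gives $C(1-\theta)^{(T-t_k)/2}\bar R_k$. Here $\bar R_k$ collects the Lyapunov value at $t_k$. A departure perturbs $\Psi$ only through the jump of the target, $\norm{x^{\star}_m-x^{\star}_{m-1}}$, and through the tracker correction, which is a bounded multiple of the current errors. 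Iterating $\Psi$ across epochs therefore gives $\bar R_k\le C'(\bar R+\sum_m\rho_m)$, once the target jumps are bounded by the $\rho_m$ as in the next paragraph.

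The second term telescopes: $\norm{x^{\star}_k-\xs}\le\sum_m\norm{x^{\star}_m-x^{\star}_{m-1}}$. The key one-step estimate is as follows. $\Fh_m$ equals $\Fh_{m-1}$ with $h_{i_m}$ replaced by $q_{i_m}$. By Lemma~\ref{lem:center}, $\norm{\nabla q_{i_m}(x)-\nabla h_{i_m}(x)}\le\frac{L-\mu}{2}\norm{x-y_{i_m}}$, and Prop.~\ref{prop:compose} shows this still holds with inherited legacies folded into $h_{i_m}$. Evaluating at $x=x^{\star}_{m-1}$ gives $\norm{\nabla\Fh_m(x^{\star}_{m-1})}\le\frac{L-\mu}{2}\rho_m$. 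Strong convexity of $\Fh_m$ with modulus $n\mu$ then yields $\norm{x^{\star}_m-x^{\star}_{m-1}}\le\beta\rho_m$. This gives the legacy bias term, and the factor $2$ in \eqref{eq:main} absorbs the $(1+O(k\beta))$ slack from $k\beta\le\frac12$. For the no-legacy branch, the same argument with $q=0$ and $b^{\mathrm{drop}}$ already rescaled by $\frac{1}{2\beta}$ gives a matching contribution.

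Next comes \eqref{eq:rho}. In the consensus branch, $\rho_m=\norm{x_{i_m}^{t_m}-x^{\star}_{m-1}}\le\hat\varepsilon(t_m)$ by Proposition~\ref{fact:gt}, which certifies distance to the current target. In the local-track branch, $\rho_m\le\norm{u^{t_m}-x_{i_m}^\ast}+\norm{x_{i_m}^\ast-\xs}+\norm{\xs-x^{\star}_{m-1}}\le\hat\varepsilon^{\mathrm{loc}}(t_m)+\zeta/\mu+\beta\sum_{m'<m}\rho_{m'}$. The last inequality uses the telescoping bound. The rule \eqref{eq:anchor} picks the smaller certificate, giving the minimum.

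For \eqref{eq:headline}, I will unroll the recursion: with $k\beta\le\frac12$, $\sum_m\rho_m\le 2\sum_m\min\{\cdot\}$. With spacing $\Delta^\star$, the choice $(1-\theta)^{\Delta^\star/2}\le 1/(4C_JC_\Psi^2)$ makes each epoch's contraction dominate the jump amplification $C_JC_\Psi^2$. Consequently $\hat\varepsilon(t_m)$ decays at least geometrically with ratio $\frac12$ across departures, so the consensus sum is $\le 2\hat\varepsilon(t_1)$. The local branch sum is $\le k(\zeta/\mu+\bar R(1-\kappa^{-1})^{t_1})$ by monotonicity. Taking the better of the two uniform choices bounds the sum of minima.

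The main obstacle will be the cross-epoch Lyapunov bookkeeping, i.e.\ controlling the jump of $\Psi$ at a departure, with constant $C_J$, in terms of $\rho_m$ and the pre-departure $\Psi$. The tracker correction $\eta_i(x_a^t-y_i)$ and the change of mixing matrix both enter there. My first attempt will bound each component of $\Psi$ (optimality, consensus, tracking) separately by $\norm{x^\star_m-x^\star_{m-1}}\le\beta\rho_m$ plus $O(\sqrt{\Psi})$, using smoothness of $q_i$.
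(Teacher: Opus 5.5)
Your plan is essentially the paper's proof. It uses the same split through $x^{\star}_k$ and the drift bound $\norm{x^{\star}_m-x^{\star}_{m-1}}\le\beta\rho_m$ of Lemma~\ref{lem:drift} telescoped as in \eqref{eq:compound}, the same triangle inequality through $x_{i_m}^{\ast}$ and $\xs$ for the local branch, the boundary-jump constant $C_J$ of Lemma~\ref{lem:jump} beaten by the spacing $\Delta^{\star}$ for geometric halving, and $k\beta\le\frac12$ to unwind the recursion. Two bookkeeping points need fixing when you write it out:
\begin{enumerate}
\item A local-anchor departure adds to $\Psi$ a term of order $C_J\beta^2a^2$, with $a\le\frac{\zeta}{\mu}+\hat\varepsilon^{\mathrm{loc}}(t_{m'})$, and $\hat\varepsilon(t_m)$ does not control it. So your consensus-branch bound $\rho_m\le\hat\varepsilon(t_m)$ must also carry a $\beta\sum_{m'<m}\rho_{m'}$-type term. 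The form of \eqref{eq:rho}, with that sum outside the minimum, permits exactly this.
\item The no-legacy branch is not ``the same argument with $q=0$''. Dropping $h_{i_m}$ removes curvature that Lemma~\ref{lem:mu} relies on, and the residual $\norm{\nabla h_{i_m}(x^{\star}_{m-1})}$ is not proportional to any anchor distance. This branch is not needed for this theorem; the paper treats it separately in Corollary~\ref{cor:selective}.
\end{enumerate}
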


\begin{remark}[Exactness for isotropic quadratics]\label{cor:quad}
If every $f_i$ is quadratic with Hessian $\frac{\mu_i+L_i}{2}I$, then $\beta$'s role is played by $0$: Legacy-GT converges to $\xs$ exactly, for arbitrary departure times, while the classic algorithm keeps its full bias \eqref{eq:dropbias}.
\end{remark}

If $t_1\ge\frac{2}{\theta}\log\frac{C\sqrt n\bar R}{\epsilon}=O\big(\frac{\hat\kappa}{(1-\lambda)^2}\log\frac{\sqrt n\bar R}{\epsilon}\big)$, then the limit error of Legacy-GT is at most $\frac{(\kappa-1)k}{n}\epsilon\le\epsilon$, and the iteration complexity to reach $2\epsilon$ equals that of GT on a static network with no departures. Departed nodes are remembered up to the accuracy the collective had achieved when they left. Whatever $t_1$ is, the legacy bias never exceeds $\frac{(\kappa-1)k}{n}\cdot\frac{\zeta}{\mu}\cdot(1+o(1))$: the adaptive anchor caps the damage of an early exit at $\frac{\kappa-1}{2}$ times the classic per-node bias $\Theta(\frac{\zeta}{n\mu})$, while retaining the exponential improvement whenever departures are late. For $\kappa<3$ Legacy-GT dominates the classic algorithm. Finally, let us make the final remark that we will discuss in App.~\ref{app:sc}

\begin{corollary}[Selective memory]\label{cor:selective}
Under the three-way rule \eqref{eq:anchor}, Theorem~\ref{thm:main} holds with the $m$-th bias summand $2\beta\rho_m$ replaced by
$\min\big\{2\beta\hat\varepsilon(t_m),\ 2\beta(\zeta/\mu+\hat\varepsilon^{\mathrm{loc}}(t_m)),\ \tfrac{\hat\zeta_{i_m}}{(n-k)\mu}\big\}$:
every departure contributes the smallest of its three certified error bounds, so Legacy-GT is never worse than the classic algorithm by more than the certification slack.
\end{corollary}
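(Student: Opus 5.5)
The plan is to rerun the proof of Theorem~\ref{thm:main}, changing only the per-departure perturbation step. Each departure must now be allowed to take one of three forms, and in each form the shift of the minimizer is charged to the certified bound that selected it. The structure is the same as before. We telescope
\[
\norm{x^\star_k-\xs}\le\sum_{m=1}^k\norm{x^\star_m-x^\star_{m-1}},
\]
bound each summand using the strong convexity of $\Fh_m$ (Lemma~\ref{lem:mu}), and then add the optimization term from Proposition~\ref{fact:gt} for the last epoch. So the new work is entirely in the one-step estimate. Write $\Fh_m=\Fh_{m-1}-h_{i_m}+q_{i_m}$ in the legacy branches and $\Fh_m=\Fh_{m-1}-h_{i_m}$ in the drop branch. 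First-order optimality of $x^\star_{m-1}$ gives
\[
\norm{x^\star_m-x^\star_{m-1}}\le\frac{\norm{\nabla \Fh_m(x^\star_{m-1})}}{\mathrm{sc}(\Fh_m)}.
\]

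\textbf{Legacy branches ($y=x_i^{t_m}$ or $y=u_i^{t_m}$).} Here $\nabla\Fh_m(x^\star_{m-1})=\nabla q_{i_m}(x^\star_{m-1})-\nabla h_{i_m}(x^\star_{m-1})$. Lemma~\ref{lem:center} bounds this by $\frac{L-\mu}{2}\rho_m$ (using the ledger curvature for inherited legacies, Prop.~\ref{prop:compose}). With $\mathrm{sc}\ge n\mu$ the shift is at most $\beta\rho_m$. Then $\rho_m$ is controlled exactly as in \eqref{eq:rho}, by $\hat\varepsilon(t_m)$ or by $\zeta/\mu+\hat\varepsilon^{\mathrm{loc}}(t_m)$ according to the branch, plus the drift $\norm{x^\star_{m-1}-\xs}$ of earlier epochs. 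The tracker hand-off is exact by Proposition~\ref{prop:invariant}, so the algorithm really is GT on $\Fh_m$.

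\textbf{Drop branch.} Here $\nabla\Fh_m(x^\star_{m-1})=-\nabla h_{i_m}(x^\star_{m-1})$. When node $i_m$ carries no inherited legacy, $\norm{\nabla f_{i_m}(x^\star_{m-1})}\le\norm{\nabla f_{i_m}(x_{i_m}^{t_m})}+L_{i_m}\norm{x_{i_m}^{t_m}-x^\star_{m-1}}\le\hat\zeta_{i_m}$, which is precisely the certificate. The surviving objective retains at least $n-m\ge n-k$ original indices, each contributing curvature at least $\mu$, so the shift is at most $\hat\zeta_{i_m}/((n-k)\mu)$. This is the third term of the minimum. The factor $2$ in the legacy terms comes from the propagation step below, which is why $b^{\mathrm{drop}}$ carries the $\frac1{2\beta}$ rescaling: under that rescaling the rule \eqref{eq:anchor} selects exactly the smallest of the three summands as stated.

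\textbf{Combining and the main obstacle.} We need to close the recursion so that the errors accumulated in earlier epochs (the $2\beta\sum_{m'<m}\rho_{m'}$ slack and its drop-branch analogue) inflate each summand by at most a factor $2$. I expect this to be the main obstacle, specifically the interaction of the two kinds of step. A drop step has no $\beta$ prefactor, so its contribution to later anchors' $\rho$ is not damped by $\beta$. I would handle this by measuring every anchor error against the \emph{current} target $x^\star_{m-1}$ rather than against $\xs$. This is what the certificates of Assumption~\ref{as:anchors} and Proposition~\ref{fact:gt} actually certify, so the cross terms vanish and the bias is a plain sum. If the local-track branch needs $\xs$ as its reference, I would pay $\norm{x^\star_{m-1}-\xs}$, bound it by induction by the partial sum, and absorb it using $k\beta\le\frac12$ via the geometric series $\sum(2\beta)^j\le 2$. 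A second subtlety is a drop at a node that had already inherited legacies, since $h_{i_m}\neq f_{i_m}$ there. I would either restrict the drop branch to legacy-free nodes, which is consistent with load-balanced heirs, or bound the legacy gradients by the same certificate via Lemma~\ref{lem:center}. Finally, the closing claim that Legacy-GT is never worse than the classic algorithm up to certification slack follows because the third term is the certified version of the drop bias \eqref{eq:dropbias}.
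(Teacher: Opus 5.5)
Your proposal is correct at the paper's level of rigor and is essentially the paper's own argument. You telescope the minimizer drift, charge an anchored departure $\beta\rho_m$ via Lemma~\ref{lem:drift} and a dropped one $\norm{\nabla h_{i_m}(x^\star_{m-1})}/((n-k)\mu)$, unroll the local-track cross terms using $k\beta\le\frac12$, and let rule \eqref{eq:anchor} select the smallest branch. Your drop step is slightly cleaner than the paper's: reading $\hat\varepsilon(t_m)$ as a certificate against the current target $x^\star_{m-1}$ gives $\norm{\nabla f_{i_m}(x^\star_{m-1})}\le\hat\zeta_{i_m}$ directly, which avoids the paper's $O(\kappa\norm{x^\star_{m-1}-\xs})$ term and its side condition on $k\kappa/(n-k)$. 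Note, however, that like the paper you reuse the $n\mu$ of Lemma~\ref{lem:mu} in anchored branches that follow a drop, where only $(n-k)\mu$ is guaranteed, so $\beta$ there should be inflated by a factor $\frac{n}{n-k}$.
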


\begin{theorem}[Lower bound for first-order memories; proof in App.~\ref{app:sc}]\label{thm:lower}
Fix $\mu<L$, $n$, and any map that compresses a departing node into a function of one first-order snapshot $(y,\nabla f_i(y))$. For every $\varepsilon>0$ there exist two instances, identical for the survivors and producing identical snapshots with $\norm{y-\xs}=\Theta(\varepsilon)$, whose true minimizers are $\frac{(L-\mu)\varepsilon}{(n-1)\mu+L}$ apart. Hence the worst-case bias of any such method is $\Omega\big(\frac{(\kappa-1)\varepsilon}{n+\kappa}\big)$, and Legacy-GT (bias $\le\frac{\kappa-1}{2n}\varepsilon$) is optimal up to a factor $O(\kappa)$.
\end{theorem}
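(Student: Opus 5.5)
The plan is an indistinguishability argument on one-dimensional quadratics. (The same construction embeds in $\R^d$ along one coordinate.) I will build two instances that share the same survivors and give the departing node the same snapshot $(y,\nabla f_i(y))$. Both departing functions will be admissible for the curvature class of Assumption~\ref{as:fun}(SC), but one will have curvature $\mu$ and the other curvature $L$. Any method whose memory of node $i$ depends only on the snapshot receives identical inputs in both instances. Its limit point is therefore the same in both, so it must be at distance at least half the gap between the two true minimizers from one of them.

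Concretely, I let the $n-1$ survivors all carry $f_j(x)=\frac{\mu}{2}x^2$, so their sum has curvature $(n-1)\mu$ and minimizer $0$. I fix $y=\varepsilon$ and $g=\mu\varepsilon$. The two candidate departing functions are
\[
f_i^{A}(x)=g\,(x-y)+\tfrac{\mu}{2}(x-y)^2,\qquad f_i^{B}(x)=g\,(x-y)+\tfrac{L}{2}(x-y)^2 .
\]
These agree in value and gradient at $y$, so they produce the identical snapshot $(y,g)$. Each is a quadratic with curvature in $[\mu,L]$, so it is $\mu$-strongly convex and $L$-smooth as required.

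Next I solve the first-order conditions. For instance $A$, $(n-1)\mu x+g+\mu(x-y)=0$ gives $x^{\star}_A=(\mu y-g)/(n\mu)=0$. For instance $B$, $(n-1)\mu x+g+L(x-y)=0$ gives
\[
x^{\star}_B=\frac{Ly-g}{(n-1)\mu+L}=\frac{(L-\mu)\varepsilon}{(n-1)\mu+L}.
\]
So the minimizers are exactly $\frac{(L-\mu)\varepsilon}{(n-1)\mu+L}$ apart. I also need $\norm{y-\xs}=\Theta(\varepsilon)$ in both instances. In $A$ this distance is exactly $\varepsilon$. In $B$ it is $\varepsilon\bigl(1-\frac{L-\mu}{(n-1)\mu+L}\bigr)=\frac{n\mu\,\varepsilon}{(n-1)\mu+L}$. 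For $n\ge1$ this is at least $\varepsilon/\kappa$, and it is $\Theta(\varepsilon)$ whenever $\kappa=O(n)$. If a uniform constant is wanted, I expect the fix is to shift $g$ so that $y$ sits between the two minimizers; this keeps the gap formula unchanged.

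The conclusion is then immediate. Dividing numerator and denominator by $\mu$ gives the gap $\frac{(\kappa-1)\varepsilon}{n-1+\kappa}$. Since the compressed memory and the survivors coincide, the worst-case bias over the pair is at least half this gap, i.e.\ $\Omega\bigl(\frac{(\kappa-1)\varepsilon}{n+\kappa}\bigr)$. Comparing with the Legacy-GT bias $\frac{\kappa-1}{2n}\varepsilon$ from Theorem~\ref{thm:main} gives the ratio $\frac{n+\kappa}{2n}\le\frac{1+\kappa}{2}$, hence optimality up to $O(\kappa)$.

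The main obstacle is not the computation but making \emph{any map} precise. I must argue that the method's entire post-departure behaviour, and hence its limit point, is a deterministic function of the survivors' data and the snapshot alone, and that this holds for whatever algorithm the survivors then run. I would state this as the modelling assumption that defines the class of first-order-snapshot memories. Everything else is the explicit algebra above.
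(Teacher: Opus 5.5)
Your construction is the paper's own proof up to a translation. The paper puts the snapshot at $y=0$ with survivor aggregate $\frac{(n-1)\mu}{2}\norm{x-b}^2$, while you put the survivors' minimizer at $0$ and $y=\varepsilon$. Both then use two departing quadratics of curvature $\mu$ and $L$ sharing the snapshot, followed by the same half-gap indistinguishability step; your instances also agree in value at $y$, which is a harmless strengthening.

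One correction to your aside. In this one-dimensional family, $y-x^{\star}_B=\frac{n}{n-1+\kappa}\,(y-x^{\star}_A)$ for every snapshot $(y,g)$ and every survivor center. So shifting $g$ can never place $y$ between the two minimizers. No fix is needed, though: the bias lower bound and the comparison with Legacy-GT only use $\norm{y-\xs}\le\varepsilon$ in both instances, and that already holds. The paper's own unqualified $\Theta(\varepsilon)$ claim for the second instance carries the same $\kappa=O(n)$ caveat you noticed.
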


\subsection{Case II: Convex Objectives ($\mu=0$)}\label{sec:cvx}
Regime (C): each $f_i$ is convex and $L_i$-smooth, $\omega_i=\frac{L_i}{2}$, $\eta_{i_m}=\frac{L_{i_m}}{2}+\eta^{\mathrm{led}}_{i_m}$, $X^{\star}=\argmin F\neq\emptyset$. Two structural facts change relative to (SC): the target is a set, so anchors must be certified against a common minimizer; and the survivors contribute no strong convexity --- all of it comes from the legacies.

\begin{assumption}[Common-minimizer anchors]\label{as:common}
There exists $\bar x^{\star}\in X^{\star}$ with $\norm{y_m-\bar x^{\star}}\le\varepsilon_m$ for all $m=1,\dots,k$. (Natural when anchors are iterates of one run converging to a single element of $X^{\star}$, as gradient methods do.)
\end{assumption}

\begin{theorem}[Convex case; proof in App.~\ref{app:con}]\label{thm:cvx}
Under Assumptions~\ref{as:fun}(C), \ref{as:net}, \ref{as:grace}, \ref{as:common}, Algorithm~\ref{alg:lgt} with consensus anchors enjoys:
\begin{itemize}
\item[(i)] \emph{(Departures create strong convexity.)} After $m\ge1$ departures, $\Fh_m$ is $\sigma_m$-strongly convex with $\sigma_m=\sum_{m'\le m}\eta_{i_{m'}}\ge \frac{m}{2}\min_i L_i$; in particular the base method converges linearly on every post-departure epoch, even though the original problem is merely convex.
\item[(ii)] \emph{(Value bias is quadratic in the anchor errors.)} The minimizer $\hat x$ of $\Fh_k$ satisfies $\norm{\hat x-\bar x^{\star}}\le\tilde\kappa\,\bar\varepsilon$ with $\tilde\kappa=\frac{\max_i L_i}{\min_i L_i}$, $\bar\varepsilon=\frac1k\sum_m\varepsilon_m$, and
\[
F(\hat x)-F^{\star}\ \le\ \frac{(2\tilde\kappa^2+3)\max_i L_i}{4}\cdot\frac{\big(\sum_{m}\varepsilon_m\big)^2}{k}\,.
\]
For a single departure: $F(\hat x)-F^{\star}\le\frac{(2\tilde\kappa^2+3)L}{4}\,\varepsilon_1^2$ --- {quadratically} small in the anchor accuracy, versus the {linear} distance bias of the (SC) case and the constant value gap $F(\xs_{S})-F^{\star}=\Theta(1)$ of the classic algorithm.
\end{itemize}
\end{theorem}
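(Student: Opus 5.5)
Part (i) follows from the structure of the surrogate \eqref{eq:surrogate}. Each survivor term $f_j$ is convex, and each legacy $q_{i_{m'}}$ is a quadratic with Hessian $\eta_{i_{m'}}I$. So $\Fh_m$ is $\sigma_m$-strongly convex with $\sigma_m=\sum_{m'\le m}\eta_{i_{m'}}$.

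The lower bound $\sigma_m\ge\frac m2\min_iL_i$ holds because in regime (C) the weight is $\eta_{i}=\frac{L_i}{2}+\eta_i^{\mathrm{led}}\ge\frac{L_i}{2}$. Inherited curvature never cancels: the ledger only adds nonnegative terms, and when a heir departs its legacy recomposes the curvature it inherited (Prop.~\ref{prop:compose}). Smoothness of $\Fh_m$ is preserved as well, with constant at most $\hloc$. Hence Proposition~\ref{fact:gt} applies on every post-departure epoch with condition number $\hloc/\sigma_m$, which gives the linear rate.

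For (ii), the first step is a gradient transfer at $\bar x^\star$. Since $\nabla F(\bar x^\star)=0$,
\[
\nabla\Fh_k(\bar x^\star)=\sum_{m}\big(\nabla q_{i_m}(\bar x^\star)-\nabla h_{i_m}(\bar x^\star)\big).
\]
By Lemma~\ref{lem:center}, with $a_i=0$ and $\eta_i=b_i/2$, each summand has norm at most $\frac{L_{i_m}}{2}\norm{\bar x^\star-y_m}\le\frac{L_{i_m}}{2}\varepsilon_m$. Chained legacies are handled by Prop.~\ref{prop:compose}, so that $h_{i_m}$ telescopes to the original $f$'s. Strong convexity of $\Fh_k$ then gives
\[
\norm{\hat x-\bar x^\star}\le\frac{\norm{\nabla\Fh_k(\bar x^\star)}}{\sigma_k}\le\frac{\frac12\max L\sum_m\varepsilon_m}{\frac k2\min L}=\tilde\kappa\bar\varepsilon .
\]

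The second step bounds the value gap through the smoothness upper bound, clause \eqref{eq:valtransfer}. Write
\[
F(\hat x)-F^\star=\big[\Fh_k(\hat x)-\Fh_k(\bar x^\star)\big]+\sum_m\big[(h_{i_m}-q_{i_m})(\hat x)-(h_{i_m}-q_{i_m})(\bar x^\star)\big].
\]
The first bracket is $\le0$ because $\hat x$ minimizes $\Fh_k$.

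For each remaining term, set $r_m=h_{i_m}-q_{i_m}$. It satisfies $r_m(y_m)=0$ and $\nabla r_m(y_m)=0$, and it has curvature in $[-\frac{L}{2},\frac{L}{2}]$. Hence $|r_m(x)|\le\frac L4\norm{x-y_m}^2$, which gives
\[
|r_m(\hat x)|+|r_m(\bar x^\star)|\le\frac L4\big(\norm{\hat x-y_m}^2+\varepsilon_m^2\big).
\]
Using $\norm{\hat x-y_m}\le\tilde\kappa\bar\varepsilon+\varepsilon_m$ and $(a+b)^2\le2a^2+2b^2$, the sum over $m$ is at most $\frac L4\big(2k\tilde\kappa^2\bar\varepsilon^2+3\sum\varepsilon_m^2\big)$.

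The main obstacle is the final constant. Matching $(2\tilde\kappa^2+3)\frac{(\sum\varepsilon_m)^2}{k}$ requires $\sum\varepsilon_m^2\le(\sum\varepsilon_m)^2/k$, which is the wrong direction of Cauchy–Schwarz. I would try two fixes in order. First, a sharper pairing: the linear-in-gradient parts $\inner{\nabla r_m}{\cdot}$ cancel against the optimality of $\hat x$, leaving only cross terms $\varepsilon_m\norm{\hat x-\bar x^\star}$. Second, if that fails, state the bound with $\sum\varepsilon_m^2$, which coincides with the claim for $k=1$ and for equal $\varepsilon_m$. The single-departure case is immediate by substituting $k=1$.
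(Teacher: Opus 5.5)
Your proposal follows the paper's proof essentially line for line. Part (i) adds the legacy curvatures. In (ii) you use the gradient transfer at $\bar x^{\star}$ plus $\sigma_k$-strong convexity to get $\delta:=\norm{\hat x-\bar x^{\star}}\le\tilde\kappa\bar\varepsilon$, then bound $|r_m(\hat x)|$ and $|r_m(\bar x^{\star})|$ separately via \eqref{eq:valtransfer}. This lands exactly on the paper's intermediate bound $\frac{\max_iL_i}{4}\big[2k\tilde\kappa^2\bar\varepsilon^2+3\sum_m\varepsilon_m^2\big]$.

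The obstacle you flag there is genuine, and the paper does not overcome it either. Its last step invokes $\sum_m\varepsilon_m^2\le(\sum_m\varepsilon_m)^2$ and ``absorbs the weaker of the two bounds''. That only yields $\big(\frac{2\tilde\kappa^2}{k}+3\big)\frac{\max_iL_i}{4}(\sum_m\varepsilon_m)^2$, and with a single nonzero $\varepsilon_m$ this is strictly larger than the claimed bound for every $k\ge2$. So, as written, neither your argument nor the paper's proves the stated constant when $k\ge2$. Your second fallback would establish a weaker theorem, not this one.

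Your first fallback is the right repair, and it goes through. The key is to bound the \emph{difference} $r_m(\hat x)-r_m(\bar x^{\star})$, which is first order in $\delta$. The sum $|r_m(\hat x)|+|r_m(\bar x^{\star})|$ is too lossy, because its $\varepsilon_m^2$ part is not damped by $k$. Integrate $\nabla r_m$ along the segment from $\bar x^{\star}$ to $\hat x$ and use $\norm{\nabla r_m(x)}\le\omega_{i_m}\norm{x-y_m}$:
\[
|r_m(\hat x)-r_m(\bar x^{\star})|\le\int_0^1\omega_{i_m}\,(\varepsilon_m+s\delta)\,\delta\,ds=\omega_{i_m}\varepsilon_m\delta+\tfrac{\omega_{i_m}}{2}\,\delta^2 .
\]
Summing over $m$, with $\Fh_k(\hat x)\le\Fh_k(\bar x^{\star})$, $\omega_{i_m}\le\frac{\max_iL_i}{2}$ and $\delta\le\tilde\kappa\bar\varepsilon$, already gives
\[
F(\hat x)-F^{\star}\le\frac{(\tilde\kappa^2+2\tilde\kappa)\max_iL_i}{4}\cdot\frac{(\sum_m\varepsilon_m)^2}{k}.
\]
This is within the claim, since $2\tilde\kappa^2+3-(\tilde\kappa^2+2\tilde\kappa)=(\tilde\kappa-1)^2+2>0$.

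You can sharpen this by keeping the strong-convexity gain $\Fh_k(\hat x)-\Fh_k(\bar x^{\star})\le-\frac{\sigma_k}{2}\delta^2$ and using $\sigma_k\ge\sum_m\omega_{i_m}$. The $\delta^2$ terms then cancel, and with $\delta\le\sum_m\omega_{i_m}\varepsilon_m/\sigma_k$,
\[
F(\hat x)-F^{\star}\le\delta\sum_m\omega_{i_m}\varepsilon_m\le\frac{\big(\sum_m\omega_{i_m}\varepsilon_m\big)^2}{\sigma_k}\le\frac{\tilde\kappa\max_iL_i}{2}\cdot\frac{\big(\sum_m\varepsilon_m\big)^2}{k}.
\]
This is attained with equality in the following instance: $d=1$, all $L_i=L$, departing $f_{i_m}=\frac L2(x-p_m)^2$, survivors $f_j\equiv0$, $y_1=\bar x^{\star}+\varepsilon$ and $y_m=\bar x^{\star}$ otherwise. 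There $\hat x=\bar x^{\star}-\varepsilon/k$ and $F(\hat x)-F^{\star}=\frac{L\varepsilon^2}{2k}$.

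One minor point in (i), shared with the paper. Under chained departures, a departing heir's $\eta$ already contains the curvature it inherited, so $\sum_{m'\le m}\eta_{i_{m'}}$ double counts. Unrolling $\Fh_m=\Fh_{m-1}-r_m$ with $r_{m'}=f_{i_{m'}}-\frac{L_{i_{m'}}}{4}\norm{\cdot}^2+(\text{affine})$ gives $\Fh_m=\sum_{j\in S_m}f_j+\sum_{m'\le m}\frac{L_{i_{m'}}}{4}\norm{\cdot}^2+(\text{affine})$. So the correct modulus is at least $\sum_{m'\le m}\frac{L_{i_{m'}}}{2}$. This still gives the lower bound $\frac m2\min_iL_i$ and the inequality $\sigma_k\ge\sum_m\omega_{i_m}$ used above, so nothing in the repair is affected.
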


First, (i) is load-bearing for clause (ii): without strong convexity of $\Fh_k$ the surrogate minimizer $\hat x$ need not be unique nor stay near $\bar x^{\star}$, and the distance bound $\delta\le\tilde\kappa\bar\varepsilon$ would be unavailable. Second, it upgrades the rate: the classic algorithm on a merely convex problem converges sublinearly, at $O(1/t)$ in value, forever; under Legacy-GT the sublinear phase lasts only until the first departure, after which Proposition~\ref{fact:gt} applies with modulus $\sigma_m$ and convergence is linear. {Third, it explains the mechanism:} the departed curvature itself is the regularizer --- the legacies act as Tikhonov terms whose center and weight are chosen (Lemma~\ref{lem:center}) to {preserve} the solution rather than distort it, which is why the added strong convexity comes with a bias that is only quadratic in the anchor errors.

Finally, note, the quadratic improvement in (ii) is not an accident of the proof: the legacy matches $f_{i_m}$ to {first} order at $y_m$, so the value residual \eqref{eq:valtransfer} is second-order --- and, unlike in (SC), no first-order distance term survives because there is no fixed $\xs$ to drift from, only a value functional to control.

\begin{proposition}[Certificates in (C); proof in App.~\ref{app:con}]\label{prop:certC}
Suppose $F$ satisfies $\nu$-QG on the initial sublevel set: $F(x)-F^{\star}\ge\frac{\nu}{2}\,\mathrm{dist}(x,X^{\star})^2$. Let $G(t)$ be the value certificate of the base method, $F(\bar x^{t})-F^{\star}\le G(t)=O(1/t)$ for convex GT \cite{qu2018harnessing}. Then:
(a) $\mathrm{dist}(y_m,X^{\star})\le\sqrt{2G(t_m)/\nu}=:\gamma_m$;
(b) Assumption~\ref{as:common} holds with the concrete choices $\bar x^{\star}=\Pi_{X^{\star}}(y_k)$ and
$\varepsilon_m=\gamma_m+\norm{y_m-y_k}$,
where the second term is {directly observable};
(c) if $f_i$ itself satisfies $\nu_i$-QG, the local track certifies $\varepsilon^{\mathrm{loc}}_m=\sqrt{2G_i^{\mathrm{loc}}(t_m)/\nu_i}$ from the standard $O(1/t)$ value rate of gradient descent on convex $f_i$; a node lacking local QG simply never has its minimizer-anchor branch selected by the rule \eqref{eq:anchor}.
\end{proposition}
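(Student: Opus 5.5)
Three short, largely independent arguments, one per part, all in regime (C) with $\nu$-QG of $F$ assumed.

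\emph{Part (a).} The anchor $y_m$ is the consensus iterate at round $t_m$. Because $F$ is convex and smooth, the base method's value certificate controls $F(y_m)-F^\star$ up to consensus error. We absorb this into $G(t_m)$: either take $y_m=\bar x^{t_m}$, or note that the certificate for convex GT \cite{qu2018harnessing} also covers the individual iterates through the consensus term. Since $y_m$ lies in the initial sublevel set (monotone decrease of the certified value), QG applies and gives
\[
\tfrac{\nu}{2}\,\mathrm{dist}(y_m,X^\star)^2\le F(y_m)-F^\star\le G(t_m).
\]
Rearranging yields $\mathrm{dist}(y_m,X^\star)\le\gamma_m$. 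The only care needed here is the sublevel-set membership and the average-versus-local iterate issue. I expect this to be the main obstacle, because the cited rate is stated for $\bar x^t$. I would handle it by defining $G(t)$ to include the consensus deviation term $L\max_j\norm{x_j^t-\bar x^t}\cdot(\cdot)$, using smoothness.

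\emph{Part (b).} Choose $\bar x^\star=\Pi_{X^\star}(y_k)$. This is well defined because $X^\star$ is closed and convex (the minimizers of a convex continuous function) and nonempty. By (a), $\norm{y_k-\bar x^\star}=\mathrm{dist}(y_k,X^\star)\le\gamma_k$. For any $m$, the triangle inequality gives
\[
\norm{y_m-\bar x^\star}\le\norm{y_m-y_k}+\norm{y_k-\bar x^\star}\le\norm{y_m-y_k}+\gamma_k.
\]
To match the stated $\varepsilon_m=\gamma_m+\norm{y_m-y_k}$, I would use that $G$ is nonincreasing in $t$ and $t_m\le t_k$. Hence $\gamma_k\le\gamma_m$, which proves Assumption~\ref{as:common}. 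The term $\norm{y_m-y_k}$ is observable, provided the anchors are relayed along the chain of legacies: $y_m$ is a constant stored in the legacy $q_{i_m}$ that some survivor holds.

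\emph{Part (c).} Gradient descent with step $1/L_i$ on a convex $L_i$-smooth $f_i$ gives the standard rate
\[
f_i(u_i^t)-f_i^\star\le \frac{L_i\norm{u_i^0-x_i^\ast}^2}{2t}=:G_i^{\mathrm{loc}}(t),
\]
with $x_i^\ast$ any minimizer of $f_i$. The iterates stay in the initial sublevel set because GD is monotone. The local $\nu_i$-QG then gives
\[
\mathrm{dist}(u_i^{t_m},\argmin f_i)\le\sqrt{2G_i^{\mathrm{loc}}(t_m)/\nu_i}.
\]
Taking $x_{i_m}^\ast$ to be the projection of $u_i^{t_m}$ onto $\argmin f_i$ makes this the required $\varepsilon^{\mathrm{loc}}_m$ of Assumption~\ref{as:anchors}.

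For a node without QG no finite certificate exists. We then set $b^{\min}_i=+\infty$, so rule \eqref{eq:anchor} never selects the minimizer branch, and the statement holds vacuously.
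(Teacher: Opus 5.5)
Parts (a) and (c) use the same argument as the paper: apply QG to the value certificate at the anchor, and for (c) to the $O(1/t)$ gradient-descent certificate of $f_i$. You are more careful than the paper about the averaged-versus-local iterate, since the paper simply declares the anchor covered by the certificate. One caveat: ``monotone decrease of the certified value'' does not by itself place $y_m$ in the initial sublevel set, because GT is not a descent method. The paper also leaves this point implicit.

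Part (b) is where you take a different route.
\begin{itemize}
\item The paper triangulates through $\Pi_{X^{\star}}(y_m)$: $\norm{y_m-\bar x^{\star}}\le\norm{y_m-\Pi_{X^{\star}}(y_m)}+\norm{\Pi_{X^{\star}}(y_m)-\Pi_{X^{\star}}(y_k)}\le\gamma_m+\norm{y_m-y_k}$. This uses (a) at every anchor plus nonexpansiveness of the projection onto the convex set $X^{\star}$.
\item You triangulate through $y_k$ and get $\gamma_k+\norm{y_m-y_k}$. This needs (a) only at the last anchor, needs no nonexpansiveness, and is the tighter certificate whenever $\gamma_k\le\gamma_m$.
\end{itemize}
The cost is your last step. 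To reach the stated $\varepsilon_m=\gamma_m+\norm{y_m-y_k}$ you invoke that $G$ is nonincreasing, but that is not a hypothesis. The statement only gives $G(t)=O(1/t)$, which allows non-monotone certificates such as $(2+\sin t)/t$. As written, the inequality $\gamma_k\le\gamma_m$ is therefore unsupported. You can close this in any of three ways:
\begin{itemize}
\item switch to the projection argument, which gives the stated form with no monotonicity;
\item add monotonicity of $G$ as an explicit assumption;
\item state your bound $\gamma_k+\norm{y_m-y_k}$ itself as a valid choice of $\varepsilon_m$ in Assumption~\ref{as:common}, which for decreasing $G$ is sharper than the paper's.
\end{itemize}
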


\section{Numerical Experiments}\label{sec:exp}
In this section we provide numerical experiements for considered methods.

\subsection{Quadratic function}
We validate the theory on heterogeneous quadratics, where every quantity in Theorem~\ref{thm:main} is computable in closed form. \textbf{Setup:} $n=20$, $d=10$, $f_i(x)=\frac12(x-c_i)^{\top}A_i(x-c_i)$ with $\mathrm{eig}(A_i)\subset[\mu,L]=[1,10]$, $\beta=\frac{\kappa-1}{2n}$, $c_i\sim\mathcal N(0,25 I)$; ring graph plus $15$ random chord; GT with $\alpha=0.02$;  initialization $x_j^0=0$. The classic baseline restarts trackers at each departure. The certificate $\hat\varepsilon(t)$ for the point-only weight \eqref{eq:etat} is calibrated from a pilot run ($R_0=3.27$, $\theta=0.074$), with the stability cap $\eta^{(t_m)}_i\le\frac{1}{2\alpha}$ mandated by the stepsize rule; anchor selection uses oracle distances (the certified rule \eqref{eq:anchor} selects identically here). We can see on Fig.~\ref{fig:traj} that Legacy-GT significantly outpeforms all other algorithms in  the sense of approached quality. At the same time, other algorithms gives convergence better too.

\begin{figure}[ht]
\centering
\includegraphics[width=0.5\columnwidth]{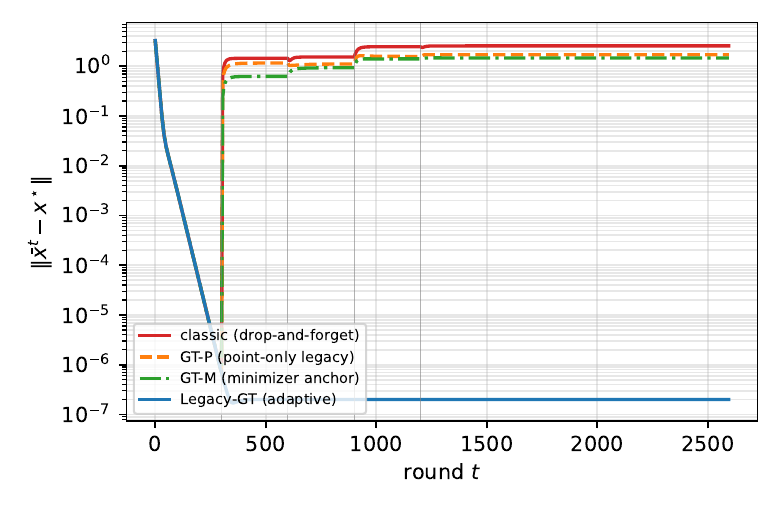}
\caption{Distance to the {true} optimum $\xs$ over $2600$ rounds; the four most heterogeneous nodes depart (gray lines). The classic algorithm and the two single-anchor variants plateau at their bias floors; Legacy-GT continues to $2.0\cdot10^{-7}$ --- seven orders of magnitude below the classic floor --- with only transient bumps at the departures, confirming that the exact tracker hand-off preserves the linear rate.}
\label{fig:traj}
\end{figure}
The idealized results above should not be extrapolated to all settings; we deliberately exhibit regime in which Legacy-GT does {not} reach high accuracy. Namely, we consider shared ill-conditioning and early mass departures (see Table~\ref{tab:stress}).

\begin{table}[ht]
\caption{Stress regime: quadratics with a {shared} eigenbasis (so the sum stays $\kappa$-ill-conditioned, unlike i.i.d.\ Hessians whose sum averages out), $k$ departures starting at round $t_1$ with gap $40$. Final errors after a long horizon.}
\label{tab:stress}
\centering
\begin{tabular}{|l|l|l|l|l|l|}
\hline
$\kappa$ & $k$ & $t_1$ & $k\beta$ & classic / Legacy-GT & gain \\
\hline
$10$  & $4$  & $50$  & $0.9$  & $3.2$ / $5.6\cdot10^{-2}$ & $57\times$ \\
$30$  & $8$  & $50$  & $5.8$  & $2.7$ / $3.2\cdot10^{-1}$ & $8.5\times$ \\
$100$ & $8$  & $50$  & $19.8$ & $2.7$ / $6.6\cdot10^{-1}$ & $4.1\times$ \\
$100$ & $12$ & $50$  & $29.7$ & $3.9$ / $6.6\cdot10^{-1}$ & $5.8\times$ \\
$100$ & $12$ & $800$ & $29.7$ & $3.9$ / $1.2\cdot10^{-1}$ & $33\times$ \\
\hline
\end{tabular}

\end{table}
 The gain over classic degrades monotonically in $k\beta$. Two features of the Table~\ref{tab:stress} are informative (consistent with Cor.~\ref{cor:selective}) and the departures is the dominant variable, exactly as the $\hat\varepsilon(t_1)$ factor in \eqref{eq:headline} predicts.
 
\subsection{Real datasets}\label{sec:exp_real}

\begin{figure}[ht]
\centering
\includegraphics[width=0.99\columnwidth]{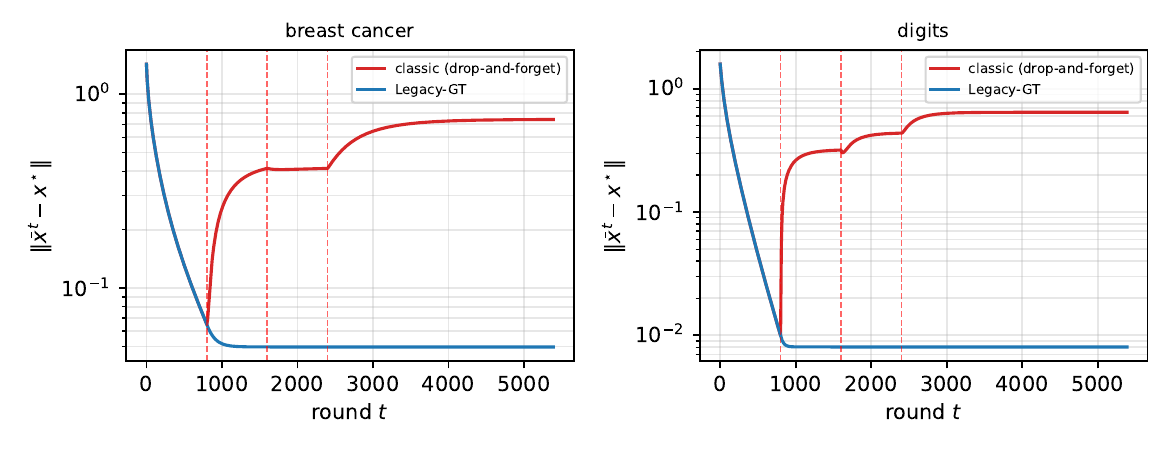}
\caption{Logistic regression ($\ell_2$-regularized) on two real datasets "breast cancer" and "digits", split non-IID across $n=10$ nodes (sorted by label and leading principal component, then chunked). The three most heterogeneous nodes depart permanently. Left: breast cancer ($569\times30$, estimated $\hat\kappa_i$ up to $322$). Right: digits, odd-vs-even ($1797\times64$, $\hat\kappa_i$ up to $157$).}
\label{fig:real}
\end{figure}

Further, let us make three observations. \emph{(a)} Each departure injects a visible jump in the classic trajectory while Legacy-GT continues toward $\xs$, ending closer. \emph{(b)} The bias ordering across the three leavers follows the theory: the first departure dominates Legacy-GT's final error, exactly the $\hat\varepsilon(t_1)$ factor in \eqref{eq:headline}. \emph{(c)} The final gaps are far from machine precision: with high $\hat\kappa_i=100$ and $\beta_i$ exceeds the smallness threshold --- consistent with the stress results of Table~\ref{tab:stress}.

\section{Conclusion}

All considered methods converge {linearly at the same rate} $\theta$; they differ in the limit point, as summarized in Table~\ref{tab:compare} (Sec.~1); recall $\varepsilon_m:=\hat\varepsilon(t_m)$ decays geometrically in the departure round.

Further, let us emphasize key difference with the classic algorithm.
(i) \emph{Late regime} ($t_1\gtrsim\theta^{-1}\log n$): classic bias stays $\Theta(k\zeta/(n\mu))$; Legacy-GT bias is $\frac{(\kappa-1)k}{n}\varepsilon(t_1)$. The crossover happens as soon as the network error drops below $\approx\frac{2\zeta}{(\kappa-1)\mu}$. (ii) \emph{Early regime}: the adaptive anchor falls back to the local minimizer track; per-node bias $\le\frac{L-\mu}{2}\cdot\frac{\zeta/\mu}{n\mu}$ versus classic $\frac{\zeta_i}{(n-k)\mu}$ with $\zeta_i\in[\mu d_i,Ld_i]$; the ratio guaranteed at least $2\times$ improvement whenever the dropped node's gradient at $\xs$ holds $\zeta_i\approx L d_i$. (iii) \emph{Optimality}: by Theorem~\ref{thm:lower} no first-order memory can improve on Legacy-GT by more than a factor $O(\kappa)$, so the compensation achieved is essentially all the compensation available at $O(d)$ farewell bandwidth; closing the remaining $\kappa$ gap provably requires second-order information (a Hessian sketch), trading bandwidth $O(rd)$ for bias --- we leave this to future work.

The three variants in Table~\ref{tab:compare} isolate the value of each design choice. The frozen weight \eqref{eq:etat} alone (point-only) already makes the bias decay with the departure time --- with any fixed weight it would not vanish at all (App.~\ref{app:optimal_curvature}). Adding the gradient term improves by factor $\frac{4n}{\kappa-1}$ and removes the dependence on the error certificate. Finally, the heir mechanism and the exact tracker hand-off contribute no bias at all --- their role is to preserve the linear rate and the unweighted-sum structure across departures.

Forgetting departed nodes costs decentralized optimization a permanent, heterogeneity-sized bias. We showed that a single farewell message --- an adaptively chosen anchor, the local gradient there, and the midpoint curvature $\eta_i=\frac{\mu_i+L_i}{2}$ preserves a departing node's influence up to $\frac{\kappa-1}{2n}$ {times the smaller of (its optimization error at departure) and its heterogeneity radius)}, composes losslessly across chained departures, keeps the exact linear rate of GT, and is optimal up to a $\kappa$ factor among all first-order memories.
\bibliographystyle{splncs04}
\bibliography{mybibliography}

\clearpage
\onecolumn
\appendix

\section{Missing Proofs for Section~\ref{sec:eta} (General Analysis and Main Theoretical Results)} \label{app:gen_main}

\subsection{Optimal Curvature}\label{sec:meta}
We start from some auxilliary Lemmas to prove our result.
\begin{lemma}[Curvature centering, general form; proof in App.~\ref{app:optimal_curvature}]\label{lem:center}
Let $h$ have curvature in $[a,b]$ (Assumption~\ref{as:fun}) and let $q$ be its legacy \eqref{eq:legacy} at anchor $y$ with $\eta\in[a,b]$. Then for all $x$,
\begin{equation}\label{eq:center}
\norm{\nabla q(x)-\nabla h(x)}\;\le\;\max\{b-\eta,\ \eta-a\}\,\norm{x-y}.
\end{equation}
The bound is minimized by $\eta^{\star}=\frac{a+b}{2}$, giving the curvature spread $\omega=\frac{b-a}{2}$: $\frac{L_h-\mu_h}{2}$ in (SC), $\frac{L_h}{2}$ in (C). 
\end{lemma}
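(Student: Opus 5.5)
The plan is to reduce \eqref{eq:center} to a spectral-norm bound on a symmetric operator whose eigenvalues lie in a shifted curvature interval. The gradient of the legacy \eqref{eq:legacy} is $\nabla q(x)=\nabla h(y)+\eta(x-y)$, since $g=\nabla h(y)$. Hence
\[
\nabla q(x)-\nabla h(x)=-\big(\nabla h(x)-\nabla h(y)-\eta(x-y)\big)=-\big(\nabla\phi(x)-\nabla\phi(y)\big),
\]
where $\phi:=h-\frac{\eta}{2}\norm{\cdot}^2$. By Assumption~\ref{as:fun}, $\phi$ has curvature in $[a-\eta,\,b-\eta]$; that is, $\phi-\frac{a-\eta}{2}\norm{\cdot}^2$ is convex and $\phi-\frac{b-\eta}{2}\norm{\cdot}^2$ is concave. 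With $\eta\in[a,b]$, the lower end is $\le 0$ and the upper end is $\ge 0$, so the interval contains zero. What remains is to show that the gradient of such a function is Lipschitz with constant $\max\{b-\eta,\eta-a\}$.

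This is the main step. If $h$ is $C^2$, it follows from the integral form of the mean-value theorem:
\[
\nabla\phi(x)-\nabla\phi(y)=\int_0^1 \big(\nabla^2 h(y+s(x-y))-\eta I\big)(x-y)\,ds .
\]
Each integrand matrix is symmetric with spectrum in $[a-\eta,b-\eta]$, so its operator norm is at most $\max\{|a-\eta|,|b-\eta|\}$. Taking norms gives \eqref{eq:center}.

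For merely $C^{1}$ functions, which is what Assumption~\ref{as:fun} only guarantees, I would avoid Hessians. Set $c=\frac{a+b}{2}-\eta$ and $r=\frac{b-a}{2}$, and consider $\psi=\phi-\frac{c}{2}\norm{\cdot}^2$. Its curvature lies in $[-r,r]$, so both $\psi+\frac{r}{2}\norm{\cdot}^2$ and $\frac{r}{2}\norm{\cdot}^2-\psi$ are convex. This is the standard characterization of an $r$-smooth function whose gradient is $r$-Lipschitz: a function $\psi$ with $\pm\psi+\frac r2\norm{\cdot}^2$ convex satisfies $|\psi(x)-\psi(y)-\inner{\nabla\psi(y)}{x-y}|\le\frac r2\norm{x-y}^2$, and this two-sided quadratic bound yields $\norm{\nabla\psi(x)-\nabla\psi(y)}\le r\norm{x-y}$ (e.g.\ Nesterov's Thm.~2.1.5 applied to the convex $\psi+\frac r2\norm{\cdot}^2$, which is $2r$-smooth, gives co-coercivity and from it the claim). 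Then
\[
\norm{\nabla\phi(x)-\nabla\phi(y)}\le |c|\norm{x-y}+r\norm{x-y}=\max\{b-\eta,\eta-a\}\norm{x-y},
\]
using $|c|+r=\max\{b-\eta,\eta-a\}$ for $\eta\in[a,b]$.

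The minimization is elementary. The map $\eta\mapsto\max\{b-\eta,\eta-a\}$ is the maximum of a decreasing and an increasing affine function, so it is minimized where they cross, at $\eta^\star=\frac{a+b}{2}$, with value $\frac{b-a}{2}=\omega$. Specializing gives $\frac{L_h-\mu_h}{2}$ in (SC) and, with $a=0$, $\frac{L_h}{2}$ in (C). For tightness, a quadratic $h$ with Hessian eigenvalues exactly $a$ and $b$ attains equality along the corresponding eigenvectors, which justifies calling $\eta^\star$ optimal.
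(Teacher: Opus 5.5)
Your proof is correct. It uses the same key tool as the paper, co-coercivity (Baillon--Haddad) for a convex smooth shift of $h$, but organizes the argument differently.

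The paper subtracts the \emph{lower} curvature. Then $\varphi=h-\frac{a}{2}\norm{\cdot}^2$ is convex and $(b-a)$-smooth, and the gradient mismatch becomes $u+(a-\eta)v$ with $u=\nabla\varphi(x)-\nabla\varphi(y)$ and $v=x-y$. The paper expands $\norm{u+(a-\eta)v}^2$ once using co-coercivity. It then splits into the cases $\eta\ge\frac{a+b}{2}$ and $\eta\le\frac{a+b}{2}$ to read off $(\eta-a)^2$ or $(b-\eta)^2$.

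You instead subtract the \emph{midpoint} shift $c$ and prove only the symmetric statement: curvature in $[-r,r]$ gives an $r$-Lipschitz gradient. You then recover the general constant by the triangle inequality, which costs nothing because $|c|+r$ equals $\max\{b-\eta,\eta-a\}$ exactly.

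The step you leave as ``from it the claim'' is the paper's expansion specialized to the symmetric case, and you should write it out. Take $g=\psi+\frac r2\norm{\cdot}^2$ and $u=\nabla g(x)-\nabla g(y)$. Co-coercivity with constant $2r$ gives $\norm{u-rv}^2=\norm{u}^2-2r\inner{u}{v}+r^2\norm{v}^2\le r^2\norm{v}^2$.

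What each route buys:
\begin{itemize}
\item Yours avoids the case split and makes the structure transparent: the constant is the radius of the curvature interval plus the distance of $\eta$ from its center. The identity $|c|+r=\max\{b-\eta,\eta-a\}$ also holds for every $\eta$, not only $\eta\in[a,b]$.
\item The paper's is a single self-contained computation.
\end{itemize}

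Your Hessian argument is fine as intuition, and you are right that the assumption only guarantees $C^1$, so the co-coercivity route is the one that counts.

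Your tightness example, a quadratic with Hessian eigenvalues exactly $a$ and $b$, goes beyond what the paper proves. It upgrades ``$\eta^\star$ minimizes the bound'' to ``$\eta^\star$ minimizes the worst-case gradient mismatch over the curvature class,'' which is what the lemma's optimality claim is really meant to convey.
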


Note, in (SC), $\eta^{\star}$ also minimizes the induced bias $\max\{L-\eta,\eta-\mu\}\norm{y-\xs}/((n-1)\mu+\eta)$.

\begin{lemma}[Bias transfer; proof in App.~\ref{app:optimal_curvature}]\label{lem:transfer}
Let $r_m:=h_{i_m}-q_{i_m}$ denote the $m$-th surrogate residual, so that $F=\Fh_k+\sum_{m=1}^{k}r_m$. Under Assumption~\ref{as:fun} with weights $\eta_{i_m}=\frac{a_{i_m}+b_{i_m}}{2}+\eta^{\mathrm{led}}_{i_m}$, for every $x$:
\begin{align}
\norm{\nabla F(x)-\nabla\Fh_k(x)}&\le\sum_{m=1}^{k}\omega_{i_m}\,\norm{x-y_m},\label{eq:gradtransfer}\\
\big|F(x)-\Fh_k(x)\big|&\le\sum_{m=1}^{k}\tfrac{\omega_{i_m}}{2}\,\norm{x-y_m}^2.\label{eq:valtransfer}
\end{align}
\end{lemma}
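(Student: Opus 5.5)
The proof is a per-legacy application of Lemma~\ref{lem:center}, combined with the triangle inequality. The decomposition $F=\Fh_k+\sum_m r_m$ holds by telescoping. Each departure replaces the departing working function $h_{i_m}$ by $q_{i_m}$ in the running sum \eqref{eq:surrogate}. So $\Fh_m=\Fh_{m-1}-h_{i_m}+q_{i_m}$, and summing over $m$ gives $F-\Fh_k=\sum_m(h_{i_m}-q_{i_m})=\sum_m r_m$. A no-legacy departure would simply remove $h_{i_m}$. This clause does not apply here, since the lemma assumes weights are assigned, so every departure carries a legacy.

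Fix $m$ and write $h=h_{i_m}$, the working function at departure: $f_{i_m}$ plus the inherited legacies. I would first determine the curvature interval of $h$. Each inherited legacy $q_\ell$ is an exact quadratic with Hessian $\eta_\ell I$. Hence $h-\frac{a+\eta^{\mathrm{led}}}{2}\norm{\cdot}^2$ is convex and $h-\frac{b+\eta^{\mathrm{led}}}{2}\norm{\cdot}^2$ is concave, with $a=a_{i_m}$, $b=b_{i_m}$ and $\eta^{\mathrm{led}}=\eta^{\mathrm{led}}_{i_m}$. So $h$ has curvature in $[a+\eta^{\mathrm{led}},\,b+\eta^{\mathrm{led}}]$, whose spread is still $\omega_{i_m}$. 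The prescribed weight $\eta_{i_m}=\frac{a+b}{2}+\eta^{\mathrm{led}}$ is exactly the midpoint of this shifted interval. Lemma~\ref{lem:center} therefore gives $\norm{\nabla r_m(x)}\le\omega_{i_m}\norm{x-y_m}$. Summing over $m$ and applying the triangle inequality yields \eqref{eq:gradtransfer}.

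For \eqref{eq:valtransfer}, I would use that $r_m(y_m)=0$ and $\nabla r_m(y_m)=0$, because $q$ matches $h$ in value and gradient at the anchor. Moreover, $r_m=h-q$ has curvature in $[a-\frac{a+b}{2},\,b-\frac{a+b}{2}]=[-\omega,\omega]$, meaning $r_m+\frac{\omega}{2}\norm{\cdot}^2$ is convex and $r_m-\frac{\omega}{2}\norm{\cdot}^2$ is concave. Define $\phi(x)=r_m(x)+\frac{\omega}{2}\norm{x-y_m}^2$. It is convex and satisfies $\phi(y_m)=0$ and $\nabla\phi(y_m)=0$, so $\phi\ge0$. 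The symmetric argument gives $r_m(x)-\frac{\omega}{2}\norm{x-y_m}^2\le0$. Together these give $|r_m(x)|\le\frac{\omega_{i_m}}{2}\norm{x-y_m}^2$, and summing over $m$ proves the value bound.

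The main obstacle is the bookkeeping for chained departures. I must check that the anchor gradient is $g=\nabla h(y)$ of the full working function, as in \eqref{eq:legacy} and Line~8, and not $\nabla f$ alone. I must also check that the ledger shift leaves the spread unchanged. Both rely on inherited legacies being exact isotropic quadratics. Once these points are verified, the rest is Lemma~\ref{lem:center} together with the one-dimensional sandwich argument.
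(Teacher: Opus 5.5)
Your proof is correct. For \eqref{eq:gradtransfer} it is essentially the paper's argument. The paper cites Proposition~\ref{prop:compose}: it splits $h_{i_m}=f_{i_m}+Q$, notes that the legacy reproduces $\nabla Q$ exactly, and applies Lemma~\ref{lem:center} to $f_{i_m}$ alone. You instead apply Lemma~\ref{lem:center} once to $h_{i_m}$ on the shifted interval $[a+\eta^{\mathrm{led}},b+\eta^{\mathrm{led}}]$. This is the same fact without the intermediate proposition. Your telescoping via $\Fh_m=\Fh_{m-1}-h_{i_m}+q_{i_m}$, i.e.\ with $\Fh_m=\sum_{j\in S_m}h_j$, is also the right way to justify $F=\Fh_k+\sum_m r_m$, which the paper simply asserts.

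The value bound \eqref{eq:valtransfer} is where you take a different route. The paper derives it \emph{from} the gradient bound by integrating along the segment, $|r_m(x)|\le\int_0^1\omega_{i_m}s\norm{x-y_m}^2\,ds$. You use only two facts: $r_m$ has curvature in $[-\omega_{i_m},\omega_{i_m}]$, and it vanishes to first order at $y_m$. Then $r_m+\frac{\omega_{i_m}}{2}\norm{\cdot-y_m}^2$ is convex and $r_m-\frac{\omega_{i_m}}{2}\norm{\cdot-y_m}^2$ is concave, and both have a critical point with value $0$ at $y_m$.

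Your sandwich is more elementary: it bypasses the co-coercivity argument behind Lemma~\ref{lem:center} entirely. For a general weight $\eta\in[a,b]$ it also gives the finer one-sided bounds $\frac{a-\eta}{2}\norm{x-y}^2\le r(x)\le\frac{b-\eta}{2}\norm{x-y}^2$, rather than the symmetric $\frac{1}{2}\max\{b-\eta,\eta-a\}\norm{x-y}^2$. The paper's integration, in turn, is a one-liner once \eqref{eq:gradtransfer} is in hand. Both routes give the same constant $\omega_{i_m}/2$.
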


\begin{proposition}[Lossless composition; proof in App.~\ref{app:optimal_curvature}]\label{prop:compose}
If $h_i=f_i+Q$ where $Q$ is a quadratic of curvature $\eta_Q I$ (the inherited legacies), then the legacy of $h_i$ with $\eta_i=\frac{\mu_i+L_i}{2}+\eta_Q$ satisfies $\norm{\nabla q_i(x)-\nabla h_i(x)}\le\frac{L_i-\mu_i}{2}\norm{x-y_i}$: the inherited part is reproduced \emph{exactly}. Hence chained departures accumulate surrogate error only once per original function.
\end{proposition}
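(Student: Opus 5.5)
The plan is to reduce the claim to Lemma~\ref{lem:center} by isolating the inherited quadratic. Write $h_i=f_i+Q$, where $Q(x)=c+\inner{p}{x}+\frac{\eta_Q}{2}\norm{x}^2$ is an isotropic quadratic. Form the legacy of $h_i$ at $y_i$ with $g_i=\nabla h_i(y_i)=\nabla f_i(y_i)+\nabla Q(y_i)$ and $\eta_i=\frac{\mu_i+L_i}{2}+\eta_Q$. It then splits as
\[
q_i(x)=\Big[f_i(y_i)+\inner{\nabla f_i(y_i)}{x-y_i}+\tfrac{\mu_i+L_i}{4}\norm{x-y_i}^2\Big]+\Big[Q(y_i)+\inner{\nabla Q(y_i)}{x-y_i}+\tfrac{\eta_Q}{2}\norm{x-y_i}^2\Big].
\]
The first bracket is exactly the legacy $q_i^{f}$ of $f_i$ at $y_i$ with the midpoint weight. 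The second bracket is the second-order Taylor expansion of $Q$ at $y_i$. Because $Q$ is quadratic with Hessian $\eta_Q I$, this expansion is exact and equals $Q(x)$ for every $x$. This identity is the key step, and it is what ``the inherited part is reproduced exactly'' means.

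Subtracting, we get $q_i-h_i=q_i^{f}-f_i$ identically, so $\nabla q_i(x)-\nabla h_i(x)=\nabla q_i^{f}(x)-\nabla f_i(x)$. Lemma~\ref{lem:center}, applied to $f_i$ with curvature in $[\mu_i,L_i]$ and $\eta=\frac{\mu_i+L_i}{2}$, gives the bound $\frac{L_i-\mu_i}{2}\norm{x-y_i}$. Alternatively, one can apply Lemma~\ref{lem:center} directly to $h_i$, whose curvature lies in $[\mu_i+\eta_Q,L_i+\eta_Q]$ and whose midpoint is precisely $\eta_i$, so the spread is unchanged. This second route also explains why the ledger update $\eta_j^{\mathrm{led}}\leftarrow\eta_j^{\mathrm{led}}+\eta_i$ in Algorithm~\ref{alg:lgt} is the right choice.

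For the ``hence'' clause about chained departures, I would argue by induction on the order of departures. I would show that each working function has the form $h_j=f_j+Q_j$, where $Q_j$ is a sum of inherited legacies and so is again an isotropic quadratic with curvature $\eta_j^{\mathrm{led}}I$. This holds because sums of quadratics $q_\ell$ of the form~\eqref{eq:legacy} stay quadratic with curvature equal to the sum of the $\eta_\ell$.

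At each departure, the residual $r_m=h_{i_m}-q_{i_m}=f_{i_m}-q^{f}_{i_m}$ involves only the original function of the departing node. The previously inherited legacies pass through with zero error. Summing over $m$, the total surrogate error $F-\Fh_k=\sum_m r_m$ therefore contains each departed $f_i$ exactly once. This is the structure used in Lemma~\ref{lem:transfer}.

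The only mildly delicate point is bookkeeping. I need to check that when a legacy is adopted, the heir's $Q$ really has isotropic Hessian $\eta_Q I$, which requires every $q_\ell$ to use a scalar weight, and that the ledger value equals that curvature. I also need to handle the no-legacy branch, where nothing is inherited and the claim holds trivially.
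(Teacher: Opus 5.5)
Your proof is correct and follows the paper's argument. You split the legacy of $h_i=f_i+Q$ into the midpoint legacy of $f_i$ plus a term that reproduces $Q$ exactly, and then apply Lemma~\ref{lem:center} to $f_i$ alone; the paper makes the same split at the gradient level, using that $\nabla Q$ is affine with slope $\eta_Q I$, while your function-level identity $q_i-h_i=q_i^{f}-f_i$ is slightly stronger and directly yields the residual structure used later in Lemma~\ref{lem:transfer}.
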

\subsection{Proofs for Section~\ref{sec:meta} (Optimal Curvature and Bias Transfer)} \label{app:optimal_curvature}

\textbf{Proof of Lemma \ref{lem:center}.}
Let $h$ have curvature in $[a,b]$. Write $h_\eta(x):=h(x)-\frac{\eta}{2}\norm{x}^2$, so that
$\nabla q(x)-\nabla h(x)=-\big[\nabla h_\eta(x)-\nabla h_\eta(y)\big]$,
and it suffices to show that $\nabla h_\eta$ is $\max\{|a|,|b|\}$-Lipschitz with $a:=a_h-\eta\le 0\le b:=b_h-\eta$ (for $\eta\in[a_h,b_h]$).

By assumption, $h_\eta$ has curvature in $[a,b]$ with $a\le0\le b$. Define $\varphi:=h_\eta-\frac{a}{2}\norm{\cdot}^2$; then $\varphi$ is convex and $(b-a)$-smooth. Fix $x,y$ and set
\[
u:=\nabla\varphi(x)-\nabla\varphi(y),\qquad v:=x-y,
\]
so that $\nabla h_\eta(x)-\nabla h_\eta(y)=u+av$. Convex $(b-a)$-smooth functions satisfy the Baillon--Haddad (co-coercivity) inequality and the Lipschitz bound:
\begin{equation}\label{eq:bh}
\inner{u}{v}\ \ge\ \frac{\norm{u}^2}{b-a},
\qquad
\norm{u}\ \le\ (b-a)\norm{v}.
\end{equation}
Expand:
\begin{gather*}
\norm{u+av}^2=\norm{u}^2+2a\inner{u}{v}+a^2\norm{v}^2
\ \le\ \norm{u}^2+\frac{2a}{b-a}\norm{u}^2+a^2\norm{v}^2
\\
=\frac{b+a}{b-a}\,\norm{u}^2+a^2\norm{v}^2 \leq 
(b+a)(b-a)\norm{v}^2+a^2\norm{v}^2=b^2\norm{v}^2,
\end{gather*}

where the inequality used $a\le0$ together with the \emph{lower} bound in \eqref{eq:bh}. Two cases:
\begin{itemize}
\item If $a+b\le0$ (i.e., $\eta\ge\frac{\mu_h+L_h}{2}$) $\Rightarrow \norm{u+av}^2\le a^2\norm{v}^2$.
\item If $a+b\ge0$, hence $\norm{u+av}^2\le b^2\norm{v}^2$.
\end{itemize}
In both cases $\norm{u+av}\le\max\{|a|,|b|\}\norm{v}=\max\{\eta-a_h,\ b_h-\eta\}\norm{v}$, proving \eqref{eq:center}. The maximum of the two linear functions of $\eta$ is minimized at their intersection $\eta^{\star}=\frac{a_h+b_h}{2}$, with value $\omega=\frac{b_h-a_h}{2}$.

\textbf{Proof of Lemma \ref{lem:transfer}.}
By Proposition~\ref{prop:compose}, $\nabla r_m=\nabla h_{i_m}-\nabla q_{i_m}$ satisfies $\norm{\nabla r_m(x)}\le\omega_{i_m}\norm{x-y_m}$ for all $x$; summing over $m$ and using $F=\Fh_k+\sum_m r_m$ gives \eqref{eq:gradtransfer}. For \eqref{eq:valtransfer}: $r_m(y_m)=0$ and $\nabla r_m(y_m)=0$ by construction of the legacy (it matches $h_{i_m}$ to first order at $y_m$), so
$|r_m(x)|=\big|\int_0^1\inner{\nabla r_m(y_m+s(x-y_m))}{x-y_m}\,ds\big|
\le\int_0^1 \omega_{i_m}\,s\norm{x-y_m}^2\,ds=\tfrac{\omega_{i_m}}{2}\norm{x-y_m}^2$;
summing over $m$ proves the claim. \hfill$\blacksquare$

For the second claim, consider (for a single departure; the general case is identical with $\hat\mu_m$ in place of $(n-1)\mu+\eta$) the bias functional
$\phi(\eta)=\frac{\max\{L-\eta,\eta-\mu\}}{(n-1)\mu+\eta}\,\norm{y-\xs}$.
On $[\mu,\frac{\mu+L}{2}]$ the numerator $L-\eta$ decreases while the denominator increases, so $\phi$ decreases.
On $[\frac{\mu+L}{2},L]$,
\[
\frac{d}{d\eta}\,\frac{\eta-\mu}{(n-1)\mu+\eta}
=\frac{(n-1)\mu+\mu}{\big((n-1)\mu+\eta\big)^2}>0,
\]
so $\phi$ increases. Hence $\eta^{\star}=\frac{\mu+L}{2}$ minimizes the bias as well: the extra strong convexity gained beyond the midpoint is a second-order effect dominated by the first-order growth of the surrogate error. \hfill$\blacksquare$

\textbf{Proof of Proposition~\ref{prop:compose}.}
Let $h_i=f_i+Q$ with $Q(x)=\frac{\eta_Q}{2}\norm{x}^2+\inner{p}{x}+c$ (any quadratic with Hessian $\eta_Q I$; inherited legacies sum to this form). The legacy of $h_i$ at anchor $y$ with curvature $\eta_i=\eta_f+\eta_Q$, $\eta_f:=\frac{\mu_i+L_i}{2}$, has gradient
\[
\nabla q_i(x)=\nabla h_i(y)+(\eta_f+\eta_Q)(x-y)
=\underbrace{\big[\nabla f_i(y)+\eta_f(x-y)\big]}_{\text{legacy of }f_i}
+\underbrace{\big[\nabla Q(y)+\eta_Q(x-y)\big]}_{=\ \nabla Q(x)\ \text{exactly}},
\]
because $\nabla Q$ is affine with slope $\eta_Q I$. Hence $\nabla q_i-\nabla h_i=\big[\nabla f_i(y)+\eta_f(\cdot-y)\big]-\nabla f_i$, and Lemma~\ref{lem:center} applied to $f_i$ alone gives $\norm{\nabla q_i(x)-\nabla h_i(x)}\le\frac{L_i-\mu_i}{2}\norm{x-y}$. \hfill$\blacksquare$

\textbf{Derivation of the point-only schedule \eqref{eq:etat}.}
Let the farewell carry only $y=x^t_i$, the penalty be $q^{\mathrm{pt}}(x)=\frac{\eta}{2}\norm{x-y}^2$, and consider a single departure ($k=1$; the general case adds the compounding of App.~\ref{app:sc}). The perturbed problem $\Fh=\sum_{j\neq i}f_j+q^{\mathrm{pt}}$ is $((n-1)\mu+\eta)$-strongly convex, and
\[
\nabla\Fh(\xs)=\eta(\xs-y)-\nabla f_i(\xs)
\ \Longrightarrow\
\norm{\hat x-\xs}\le\frac{\eta\,\varepsilon_t+\zeta_i}{(n-1)\mu+\eta},
\quad \varepsilon_t=\norm{y-\xs},\ \zeta_i=\norm{\nabla f_i(\xs)}.
\]
The map $\eta\mapsto\frac{\eta\varepsilon_t+\zeta_i}{(n-1)\mu+\eta}$ has derivative proportional to $\varepsilon_t(n-1)\mu-\zeta_i$: if $\varepsilon_t>\frac{\zeta_i}{(n-1)\mu}$ (early departure) the bias is minimized at $\eta=0$, i.e., \emph{no} point-only memory beats drop-and-forget; if $\varepsilon_t<\frac{\zeta_i}{(n-1)\mu}$ it decreases toward the limit $\varepsilon_t$ as $\eta\to\infty$ --- but $\eta=\infty$ pins the solution to $y$, discards the survivors' information, and is inconsistent under multiple departures. Balancing the numerator ($\eta\varepsilon_t\approx\zeta_i$) with the observable proxies $\zeta_i\approx\norm{\nabla f_i(y)}$ (error $O(L\varepsilon_t)$) and the certificate $\varepsilon_t\le\hat\varepsilon(t)$ gives \eqref{eq:etat}; substituting back,
\[
\norm{\hat x-\xs}\ \le\ \frac{2\,\norm{\nabla f_i(y)}}{(n-1)\mu+\norm{\nabla f_i(y)}/\hat\varepsilon(t)}\ \le\ 2\,\hat\varepsilon(t),
\]
with the last step valid once $\eta^t_i\ge(n-1)\mu$, i.e., once the departure is late enough for memory to pay at all. Comparing with the gradient-anchored bias $\frac{\kappa-1}{2n}\hat\varepsilon(t)$ (Lemmas~\ref{lem:center}, \ref{lem:drift}) exhibits the $\frac{4n}{\kappa-1}$ gap quoted in the main text: encoding the \emph{direction} $\nabla f_i(y)$ in the penalty's linear term, rather than fighting it with the penalty's strength, is what buys the factor $n$.

\textbf{Completing Proposition \ref{prop:pointonly}: the cap and the rate.}
The surrogate objective of GT-P is $\big((n-1)\mu+\eta\big)$-strongly convex and its heir's working function is $\big(L_{a}+\eta\big)$-smooth. Proposition~\ref{fact:gt} therefore applies verbatim with $\hloc$ replaced by $\hloc+\eta$; the stepsize rule $\alpha=\Theta\big(\frac{(1-\lambda)^2}{\hloc+\eta}\big)$ is consistent with a \emph{fixed} $\alpha$ only if $\eta\le\frac{c_0}{\alpha}$ for an absolute $c_0$ --- hence the cap, and hence the divergence observed in Sec.~\ref{sec:exp} when the cap is removed and $\eta^{(t_m)}_i\propto(1-\theta)^{-t_m/2}$ outgrows it. Under the cap, linear convergence to the surrogate optimum holds at the stated rate, and the bias is the minimum of: the balanced bound $2\hat\varepsilon(t_m)$ when \eqref{eq:etat} is feasible within the cap, the capped bound $\frac{\zeta_i}{(n-1)\mu+c_0/\alpha}$ otherwise, and the drop bound $\frac{\hat\zeta_i}{(n-k)\mu}$ if the weight is set to zero; multiple departures compound exactly as in the compounding bound of A.\ref{app:sc}. \hfill$\blacksquare$

\subsection{Technical lemmas for Strong Convex Case}
\label{app:tech_lammas}
\begin{lemma}[Uniform strong convexity; proof in App.~\ref{app:tech_lemmas_proof}]\label{lem:mu}
With curvatures chosen as in Alg.~\ref{alg:lgt}, $\Fh_m$ is $n\mu$-strongly convex for all $m$.
\end{lemma}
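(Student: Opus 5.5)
The plan is to prove the statement by induction over departures, tracking how much curvature each original index contributes. Throughout, $\Fh_m$ is read as defined in \eqref{eq:surrogate}, i.e.\ every departed index $i_\ell$ enters through its legacy $q_{i_\ell}$. For each current survivor $j\in S_m$, let $A_j\subseteq[n]$ be the set of original indices whose information $h_j$ carries: $j$ itself, together with all indices whose legacies (possibly chained) were bequeathed to $j$.

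\textbf{Invariant.} After $m$ departures, for every $j\in S_m$ I will show:
\begin{itemize}
\item[(a)] $h_j=f_j+Q_j$, where $Q_j$ is a quadratic with Hessian exactly $\eta_j^{\mathrm{led}}I$;
\item[(b)] $\eta_j^{\mathrm{led}}\ge(|A_j|-1)\mu$;
\item[(c)] the sets $\{A_j\}_{j\in S_m}$ partition $[n]$.
\end{itemize}
Given (a)--(c), $h_j$ is $(\mu_j+\eta_j^{\mathrm{led}})$-strongly convex, hence $|A_j|\mu$-strongly convex. Summing over $j\in S_m$ and using (c) gives that $\Fh_m=\sum_{j\in S_m}h_j$ is $\sum_j|A_j|\mu=n\mu$-strongly convex.

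\textbf{Base case and induction step.} At $m=0$ we have $h_j=f_j$, $\eta_j^{\mathrm{led}}=0$ and $A_j=\{j\}$, so (a)--(c) hold trivially. For the step, suppose node $i=i_{m+1}$ departs and hands its legacy to heir $a$.
\begin{itemize}
\item By \eqref{eq:legacy}, $q_i$ is a quadratic with Hessian $\eta_iI$, where $\eta_i=\frac{\mu_i+L_i}{2}+\eta_i^{\mathrm{led}}$.
\item Since $L_i\ge\mu_i$ and, by (b) for $i$, $\eta_i^{\mathrm{led}}\ge(|A_i|-1)\mu$, we get $\eta_i\ge\mu_i+\eta_i^{\mathrm{led}}\ge|A_i|\mu$.
\item Line 12 of Algorithm~\ref{alg:lgt} sets $h_a\leftarrow h_a+q_i$ and $\eta_a^{\mathrm{led}}\leftarrow\eta_a^{\mathrm{led}}+\eta_i$. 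So (a) persists for $a$: the sum of two quadratics with Hessians $\eta_a^{\mathrm{led}}I$ and $\eta_iI$ has Hessian equal to the new ledger times $I$.
\item Setting $A_a\leftarrow A_a\cup A_i$, (b) persists because $\eta_a^{\mathrm{led}}+\eta_i\ge(|A_a|-1)\mu+|A_i|\mu=(|A_a\cup A_i|-1)\mu$, the sets being disjoint by (c).
\item (c) persists because exactly one heir absorbs $A_i$ and $i$ leaves $S$.
\end{itemize}
Nodes other than $i$ and $a$ are unchanged.

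\textbf{Main obstacle.} The delicate point is the bookkeeping in (a): the ledger must equal the \emph{exact} accumulated legacy Hessian, not just bound it. This is what makes the frozen weight $\eta_i=\frac{\mu_i+L_i}{2}+\eta_i^{\mathrm{led}}$ carry at least $|A_i|\mu$ of curvature, rather than only $\mu_i$. I will check this directly from Line 12 and the definition \eqref{eq:legacy}, in the same way as in Proposition~\ref{prop:compose}, where the inherited quadratic is reproduced exactly. The single-heir rule is what guarantees the partition property (c); with multiple heirs, an index's curvature could be counted twice or split.
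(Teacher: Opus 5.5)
Your proof is correct and follows the paper's argument: an induction over departures in which the ledger $\eta_j^{\mathrm{led}}$ carries at least $\mu$ of curvature for each original index folded into node $j$, and the single-heir rule counts every index exactly once, so summing gives $n\mu$. Your partition invariant (c) and exact-Hessian bookkeeping (a) make precise what the paper's one-paragraph proof states informally. Like the paper, you restrict to departures that actually issue a legacy, which is the right reading, since the no-legacy branch would lose that node's curvature.
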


\begin{lemma}[Per-departure drift; proof in App.~\ref{app:tech_lemmas_proof}]\label{lem:drift}
It holds that
$\norm{x^{\star}_m-x^{\star}_{m-1}}\le\beta\,\norm{y_{i_m}-x^{\star}_{m-1}}$.
\end{lemma}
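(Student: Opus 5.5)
The statement to prove is Lemma~\ref{lem:drift}: $\norm{x^{\star}_m-x^{\star}_{m-1}}\le\beta\,\norm{y_{i_m}-x^{\star}_{m-1}}$. The plan is the standard perturbation argument for minimizers of strongly convex functions. We compare $\Fh_m$ and $\Fh_{m-1}$, evaluate the gradient of the new surrogate at the old minimizer, and convert that gradient into a distance using the uniform strong convexity of Lemma~\ref{lem:mu}.

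\emph{Step 1 (structure of one departure).} By \eqref{eq:surrogate}, passing from $\Fh_{m-1}$ to $\Fh_m$ replaces the working function $h_{i_m}$ of the departing node by its legacy $q_{i_m}$. Everything else is unchanged, since the heir simply adds $q_{i_m}$ to its own working function. Hence
\[
\Fh_m=\Fh_{m-1}-h_{i_m}+q_{i_m}=\Fh_{m-1}-r_m,
\]
with $r_m=h_{i_m}-q_{i_m}$ as in Lemma~\ref{lem:transfer}. Since $x^{\star}_{m-1}$ is the minimizer of $\Fh_{m-1}$, we have $\nabla\Fh_{m-1}(x^{\star}_{m-1})=0$, so
\[
\nabla\Fh_m(x^{\star}_{m-1})=-\nabla r_m(x^{\star}_{m-1}).
\]

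\emph{Step 2 (residual gradient).} We bound $\nabla r_m$ using Proposition~\ref{prop:compose}. This proposition applies because the inherited legacies sum to a quadratic with Hessian $\eta^{\mathrm{led}}I$, and the weight is $\eta_{i_m}=\frac{\mu_{i_m}+L_{i_m}}{2}+\eta^{\mathrm{led}}_{i_m}$. It gives
\[
\norm{\nabla r_m(x)}\le\frac{L_{i_m}-\mu_{i_m}}{2}\norm{x-y_{i_m}}\le\frac{L-\mu}{2}\norm{x-y_{i_m}}.
\]
The last step needs $L_{i_m}-\mu_{i_m}\le L-\mu$, which holds because $L_{i_m}\le L$ and $\mu_{i_m}\ge\mu$. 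Evaluating at $x=x^{\star}_{m-1}$ yields
\[
\norm{\nabla\Fh_m(x^{\star}_{m-1})}\le\tfrac{L-\mu}{2}\,\rho_m, \qquad \rho_m=\norm{y_{i_m}-x^{\star}_{m-1}}.
\]

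\emph{Step 3 (strong convexity).} By Lemma~\ref{lem:mu}, $\Fh_m$ is $n\mu$-strongly convex, so its gradient is strongly monotone:
\[
\inner{\nabla\Fh_m(x)-\nabla\Fh_m(x^{\star}_m)}{x-x^{\star}_m}\ge n\mu\norm{x-x^{\star}_m}^2.
\]
Take $x=x^{\star}_{m-1}$, use $\nabla\Fh_m(x^{\star}_m)=0$, and apply Cauchy--Schwarz to get
\[
\norm{x^{\star}_{m-1}-x^{\star}_m}\le\frac{\norm{\nabla\Fh_m(x^{\star}_{m-1})}}{n\mu}\le\frac{L-\mu}{2n\mu}\,\rho_m=\beta\,\rho_m.
\]

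\emph{Main obstacle.} The only delicate point is the bookkeeping in Step~2 when the departing node has itself inherited legacies. We must check that the residual involves only $f_{i_m}$'s curvature spread and not $\eta^{\mathrm{led}}$; this is exactly what Proposition~\ref{prop:compose} provides. We must also confirm that Lemma~\ref{lem:mu} supplies the full constant $n\mu$, and not $(n-m)\mu$, so that $\beta$ carries the factor $n$. The no-legacy branch of \eqref{eq:anchor} is outside this lemma, since it concerns only departures that leave a legacy; dropped nodes are handled separately by the drop bound of Corollary~\ref{cor:selective}.
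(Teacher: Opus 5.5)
Your proposal is correct and follows the paper's proof essentially line for line. You write $\Fh_m=\Fh_{m-1}-h_{i_m}+q_{i_m}$, use $\nabla\Fh_{m-1}(x^{\star}_{m-1})=0$ and Proposition~\ref{prop:compose} to get $\norm{\nabla\Fh_m(x^{\star}_{m-1})}\le\frac{L-\mu}{2}\rho_m$, and then convert this to a distance via the $n\mu$-strong convexity of Lemma~\ref{lem:mu}.

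One small refinement to your closing remark: the constant $n\mu$ from Lemma~\ref{lem:mu} tacitly requires that every earlier departure also left a legacy, not only the $m$-th one. The paper makes this assumption implicitly as well.
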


\begin{lemma}[Boundary jump; proof in App.~\ref{app:tech_lemmas_proof}]\label{lem:jump}
At a departure, re-averaging over $n-m$ nodes, applying the tracker correction of Alg.~\ref{alg:lgt} (line 12), and shifting the target from $x^{\star}_{m-1}$ to $x^{\star}_m$ inflate the Lyapunov function by at most an absolute constant: $\Psi_{\mathrm{new}}\le C_J\,\Psi_{\mathrm{old}}$.
\end{lemma}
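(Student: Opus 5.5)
The plan is to write $\Psi$ as the standard gradient-tracking Lyapunov function of Proposition~\ref{fact:gt}:
\[
\Psi^t=\norm{\bar x^t-x^\star_m}^2+c_1\sum_j\norm{x_j^t-\bar x^t}^2+c_2\alpha^2\sum_j\norm{z_j^t-\bar z^t}^2 .
\]
I would then track each of its three blocks through the three operations of a departure, in this order:
\begin{itemize}
\item[(a)] removal of node $i$ and re-averaging over $S_m$;
\item[(b)] the tracker correction of Algorithm~\ref{alg:lgt};
\item[(c)] the change of target from $x^\star_{m-1}$ to $x^\star_m$.
\end{itemize}
Each block of $\Psi_{\mathrm{new}}$ will be bounded by a constant multiple of $\Psi_{\mathrm{old}}$ plus terms proportional to $\rho_m^2=\norm{y_{i_m}-x^\star_{m-1}}^2$. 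The last step is to show that $\rho_m^2\lesssim\Psi_{\mathrm{old}}$, which absorbs those terms and yields $C_J$.

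\textbf{Step (a): re-averaging.} The new average is a convex combination of old iterates. Hence $\norm{\bar x_{\mathrm{new}}-\bar x_{\mathrm{old}}}\le\max_j\norm{x_j-\bar x_{\mathrm{old}}}$, which is bounded by $\sqrt{n}$ times the consensus block. Removing a row can only shrink the consensus sums once they are recentred at the new mean, since the mean minimises a sum of squares. So this step costs a factor depending only on $n$ and $c_1$.

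\textbf{Step (b): tracker correction.} By Proposition~\ref{prop:invariant}, the new tracker mean equals $\frac{1}{n-m}\sum_{j\in S_m}\nabla h_j(x_j)$ exactly. The deviation of the heir's tracker is therefore
\[
z_i+\eta_i(x_a-y_i)=(z_i-\bar z)+\bar z+\eta_i(x_a-x_i)+\eta_i(x_i-y_i).
\]
The first three pieces are controlled by the tracking and consensus blocks, together with $\norm{\bar z-\frac1n\nabla\Fh_{m-1}(x^\star_{m-1})}\le \hloc\norm{\bar x-x^\star_{m-1}}$. The weight factor is harmless because $\alpha\eta_i\le\alpha\hloc=O(1)$ under the stepsize rule. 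What remains is the term $\alpha\eta_i\norm{x_i-y_i}\le\alpha\hloc\,(\norm{x_i-x^\star_{m-1}}+\rho_m)$.

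\textbf{Step (c): target shift.} By Lemma~\ref{lem:drift}, $\norm{\bar x-x^\star_m}\le\norm{\bar x-x^\star_{m-1}}+\beta\rho_m$, and $\beta\le\frac12$.

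\textbf{Absorbing $\rho_m$ (main obstacle).} The remaining task is to show $\rho_m\le C\sqrt{\Psi_{\mathrm{old}}}$ in all three branches of rule \eqref{eq:anchor}.
\begin{itemize}
\item \emph{No-legacy branch.} Here no $q_i$ is added. The correction $z_a+z_i-\nabla h_i(x_i)$ keeps the tracker block within $\Psi_{\mathrm{old}}$, and this time the target shift is bounded directly by $\frac{1}{(n-m)\mu}\norm{\nabla h_i(x^\star_{m-1})}$. This in turn is bounded by the old optimality error through $\sum_j\nabla h_j(x^\star_{m-1})=0$ and smoothness, provided one uses the sub-sum bound $\norm{\nabla h_i(x^\star_{m-1})}\le \hloc\norm{x_i-x^\star_{m-1}}+\norm{z_i}+\dots$. \emph{I expect this sub-step to be delicate:} a heterogeneity term $\zeta$ may survive here, and I would look for a way to charge it to the tracker block.
\item \emph{Consensus branch.} Here $y=x_i^{t_m}$, so $\rho_m\le\norm{x_i-\bar x}+\norm{\bar x-x^\star_{m-1}}\le C_\Psi\sqrt{\Psi_{\mathrm{old}}}$ directly.
\item \emph{Local-track branch.} This is the real difficulty, since $u_i$ is not part of the Lyapunov state. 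My first attempt uses the selection rule itself. The branch is taken only when $b^{\mathrm{min}}_i\le b^{\mathrm{cons}}_i=\hat\varepsilon(t_m)$, so
\[
\rho_m\le\norm{u_i-x_i^\ast}+\norm{x_i^\ast-\xs}+\norm{\xs-x^\star_{m-1}}\le b^{\mathrm{min}}_i+\norm{\xs-x^\star_{m-1}}\le\hat\varepsilon(t_m)+\norm{\xs-x^\star_{m-1}} .
\]
I would then read $\Psi_{\mathrm{old}}$ as the certified envelope $\Psi_{\mathrm{old}}\asymp\hat\varepsilon(t_m)^2/C_\Psi^2$ that Proposition~\ref{fact:gt} supplies, and the accumulated drift $\norm{\xs-x^\star_{m-1}}\le 2\beta\sum_{m'<m}\rho_{m'}$ as already absorbed inductively into the envelope. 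If this identification fails, the fallback is to include $\norm{u_i^t-x^\star_{m-1}}^2$ as an auxiliary block of $\Psi$. That block is nonincreasing apart from the drift, and its inclusion makes the multiplicative bound immediate.
\end{itemize}

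Collecting the three steps gives $\Psi_{\mathrm{new}}\le C_J\Psi_{\mathrm{old}}$, with $C_J$ depending only on $c_1$, $c_2$, $C_\Psi$ and $\alpha\hloc$.
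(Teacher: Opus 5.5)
Your consensus-branch argument is sound and follows the paper's Steps 1--4 closely: the mean shift, the re-centred consensus sum, the target shift via Lemma~\ref{lem:drift} with $\rho_m\le C_\Psi\sqrt{\Psi_{\mathrm{old}}}$, and the tracker block via the pre-departure tracking identity and $\alpha\hloc=O(1)$. Your remark that re-centring at the new mean can only decrease the sum even improves the paper's $V_x'\le 4V_x$ to $V_x'\le V_x$. Two small repairs are needed there. First, $\norm{\bar x'-\bar x}=\norm{x_i-\bar x}/N'\le\sqrt{V_x}/N'$ carries no factor of $n$, and you need this for $C_J$ to be absolute, as your last sentence asserts. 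Second, the correct bound is $\norm{\bar z}\le\hloc\big(\sqrt{D}+\sqrt{V_x/N}\big)$, not $\hloc\norm{\bar x-x^{\star}_{m-1}}$.

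The genuine gap is the absorption of $\rho_m$ in the local-track and no-legacy branches. It cannot be closed, because in those branches the multiplicative bound is false.

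\begin{itemize}
\item \emph{The envelope step runs the wrong way.} The certificate only gives $C_\Psi\sqrt{\Psi_{\mathrm{old}}}\le\hat\varepsilon(t_m)$. Absorbing $\beta^2\rho_m^2$ requires the reverse inequality, so reading $\Psi_{\mathrm{old}}$ as its envelope does not help. Moreover, $\zeta/\mu$ and $B_{m-1}$ are frozen quantities that do not shrink with the state.
\item \emph{A concrete counterexample.} Take a pre-departure state at (or arbitrarily near) the gradient-tracking fixed point of $\Fh_{m-1}$: $x_j=x^{\star}_{m-1}$ and $z_j=0$, so $\Psi_{\mathrm{old}}=0$. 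Rule \eqref{eq:anchor} looks only at certificates, not at $\Psi_{\mathrm{old}}$, so either branch can be selected here.
\item \emph{Local-track branch.} An anchor $u_i^{t}\neq x^{\star}_{m-1}$ makes line~13 set $z_a'=\eta_i(x^{\star}_{m-1}-u_i^{t})\neq 0$, and it generically shifts the target, so $\Psi_{\mathrm{new}}>0$. In this branch line~13 also omits $\nabla h_i(u_i^t)-\nabla h_i(x_i^t)$. So the ``exactly'' you quote from Proposition~\ref{prop:invariant} holds only for the consensus anchor. This is harmless for $V_z'$, but it confirms that the local hand-off injects a state-independent perturbation.
\item \emph{No-legacy branch.} The heir's tracker jumps by $-\nabla h_i(x^{\star}_{m-1})$, which is of size comparable to $\zeta_i$. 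Since $V_z=0$ at this state, there is nothing in the tracker block to charge it to.
\item \emph{Your fallback fails as well.} Gradient descent on $f_i$ drives $u_i^t$ to $x_i^{\ast}$. Hence $\norm{u_i^t-x^{\star}_{m-1}}^2$ tends to $\norm{x_i^{\ast}-x^{\star}_{m-1}}^2\neq 0$ and need not be monotone. Adding it to $\Psi$ destroys the contraction $\Psi^{t+1}\le(1-\theta)\Psi^t$ that this lemma is meant to feed.
\end{itemize}

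The paper's own proof does not absorb this term either. It ends with $\Psi_{\mathrm{new}}\le C_J\big(\Psi_{\mathrm{old}}+\beta^2a^2\big)$, where $a=0$ for a consensus anchor and $a\le\zeta/\mu+\hat\varepsilon^{\mathrm{loc}}(t_m)$ (plus the carried drift $B_{m-1}$) for a local one. It then feeds the extra term into the recursion \eqref{eq:rec}. The no-legacy hand-off, which the paper does not treat, needs an analogous additive term governed by $\hat\zeta_{i_m}$. That additive form is the statement you can actually prove, and it is what you should aim for.
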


\subsection{Proofs for Section~\ref{sec:theory} (Case I: Strongly Convex Objectives)} \label{app:sc}

\textbf{A.2.1\quad Setting and the Lyapunov function}
Fix an epoch with survivor set $S$, $|S|=N$, objective $\Fh$ with minimizer $x^{\star}_{\mathrm{ep}}$, local working functions $h_j$ that are $\mu_j^{h}$-strongly convex and $L_j^{h}$-smooth with $L_j^h\le\hloc:=L(1+\ell_{\max})$. Denote $\bar x=\frac1N\sum_{j\in S}x_j$, $\bar z=\frac1N\sum_j z_j$, and
\[
D:=\norm{\bar x-x^{\star}_{\mathrm{ep}}}^2,\qquad
V_x:=\sum_{j\in S}\norm{x_j-\bar x}^2,\qquad
V_z:=\sum_{j\in S}\norm{z_j-\bar z}^2,
\]
\[
\Psi\ :=\ D+c_1 V_x+c_2\,\alpha^2 V_z .
\]
Proposition~\ref{fact:gt} (the standard small-gain analysis of gradient tracking, e.g., \cite{qu2018harnessing}, Sec.~IV, or \cite{nedic2017diging}, Thm.~3.2) provides absolute constants $c_1,c_2$ and a stepsize $\alpha=\Theta\big(\frac{(1-\lambda)^2}{\hloc}\big)$ such that within the epoch $\Psi^{t+1}\le(1-\theta)\Psi^t$ with $\theta=c\frac{(1-\lambda)^2}{\hat\kappa}$, $\hat\kappa=\hloc/\mu$, and moreover $\norm{x_j-x^{\star}_{\mathrm{ep}}}^2\le 2D+2V_x\le C_\Psi^2\Psi$ for all $j$, with $C_\Psi^2=2\max\{1,c_1^{-1}\}$. We take the stepsize and $(c_1,c_2)$ tuned once for the worst constants over all epochs ($\lambda$, $\hloc$), which is legitimate since the guarantee is monotone in these parameters.

\textbf{A.2.3\quad Proof of Theorem \ref{thm:main}} \label{app:theorem_proof}
Let $\psi_m:=\sqrt{\Psi\ \text{at }t_m\text{ (w.r.t.\ }x^{\star}_{m-1},S_{m-1})}$ and $\Delta_m:=t_m-t_{m-1}$ ($t_0=0$). Proposition~\ref{fact:gt} within epochs and \eqref{eq:jump} at boundaries give the scalar recursion
\begin{equation}\label{eq:rec}
\psi_{m}\ \le\ (1-\theta)^{\Delta_m/2}\,\sqrt{C_J}\,\big(\psi_{m-1}+\beta a_{m-1}\big),
\qquad \psi_0\le C\sqrt{n}\,\bar R,
\end{equation}
where $\bar R^2$ bounds $\frac1n\Psi^0$ under arbitrary initialization ($\bar R\le R+\alpha\max_j\norm{\nabla h_j(x_j^0)}\le R+O(\zeta/\hloc+R)$; with identical initialization $x_j^0\equiv x^0$ the consensus part vanishes and only the tracker part contributes). The anchor error at departure $m$ satisfies, by Step 3 of A.\ref{app:theorem_proof} and the anchor rule \eqref{eq:anchor},
\begin{equation}\label{eq:rhobound}
\rho_m\ \le\ \min\Big\{\,C_\Psi\psi_m,\ \ \tfrac{\zeta}{\mu}+\hat\varepsilon^{\mathrm{loc}}(t_m)+B_{m-1}\Big\},
\qquad B_{m-1}\le\beta\!\!\sum_{m'<m}\!\!\rho_{m'}\ \ \text{(see the compounding bound above)},
\end{equation}
and $C_\Psi\psi_m\le\hat\varepsilon(t_m):=C\sqrt n\bar R(1-\theta)^{t_m/2}\cdot\prod_{m'\le m}\sqrt{C_J}(1+\beta a_{m'-1}/\psi_{m'-1})$; under the spacing condition $\Delta_m\ge\Delta^{\star}=\frac2\theta\log(4C_JC_\Psi^2)$ each factor $(1-\theta)^{\Delta_m/2}\sqrt{C_J}\le\frac12 C_\Psi^{-1}\cdot C_\Psi\le\frac12$, so $\psi_m\le\frac12(\psi_{m-1}+\beta a_{m-1})$ and the product telescopes into the constant absorbed in $C$; this justifies the form of $\hat\varepsilon(t)$ used in the statement (without spacing, apply \eqref{eq:rec} verbatim, which only improves the exponent to $t_m$ in place of $t_1$ but with constants $C_J^{m/2}$). This establishes \eqref{eq:rho}.

Finally, for $T\ge t_k$, one last application of Proposition~\ref{fact:gt} on the terminal epoch gives
$\norm{x_j^T-x^{\star}_k}\le C_\Psi(1-\theta)^{(T-t_k)/2}\psi_k^{+}$ with $\psi_k^+\le\sqrt{C_J}(\psi_k+\beta a_k)=:C\bar R_k$, and the triangle inequality with the compounding bound \eqref{eq:compound},
\[
\norm{x_j^T-\xs}\ \le\ \norm{x_j^T-x^{\star}_k}+B_k
\ \le\ C(1-\theta)^{\frac{T-t_k}{2}}\bar R_k+\beta\sum_{m=1}^{k}\rho_m ,
\]
proves \eqref{eq:main} (the factor $2$ in the theorem covers the $a_m$-vs-$\rho_m$ bookkeeping via $k\beta\le\frac12$ and \eqref{eq:rhobound}). The headline bound \eqref{eq:headline} follows by inserting \eqref{eq:rhobound}: the consensus branch gives $\sum_m\rho_m\le 2C_\Psi\psi_1\le C\sqrt n\bar R(1-\theta)^{t_1/2}$ under spacing (geometric halving), while the local branch gives $\sum_m\rho_m\le 2k(\frac{\zeta}{\mu}+\bar R(1-\kappa^{-1})^{t_1})$ using $k\beta\le\frac12$ to unwind the $B_{m-1}$ terms; taking the minimum termwise yields \eqref{eq:headline}. \hfill$\blacksquare$

\textbf{A.2.4\quad Comments on tightness}
The $\sqrt n$ in $\hat\varepsilon(t)$ is an artifact of allowing adversarial initialization inside the Lyapunov function; with identical initialization and homogeneous gradients it reduces to $O(1)$. The condition $k\beta\le\frac12$ ($k(\kappa-1)\le n$) is only used to keep compounding constants below $2$; beyond it, every bound degrades gracefully by $e^{k\beta}$. The spacing condition $\Delta^{\star}=O(1/\theta)$ matches the intuition that the network needs one mixing-and-contraction period to ``absorb'' a departure; clustered departures are covered by the non-spaced form of \eqref{eq:rec}.

\textbf{Proof of Theorem \ref{thm:lower} (lower bound).}
Consider algorithms whose retained memory of the departing node is an arbitrary measurable function $M(y,v)$ of one first-order snapshot $y\in\R^d$, $v=\nabla f_i(y)$, and whose output after the departure is any function of $M(y,v)$ and the survivors' data. Fix the survivors' aggregate as $F_{\mathrm{rest}}(x)=\frac{(n-1)\mu}{2}\norm{x-b}^2$ (admissible under Assumption~\ref{as:fun}). Work on the line $d=1$ and fix a snapshot $(y,v)=(0,\gamma)$, $\gamma\neq0$.

Both of the following are $\mu$-strongly convex, $L$-smooth, and produce exactly this snapshot:
\[
f(x)=\tfrac{\mu}{2}(x-a)^2,\ \ a=-\tfrac{\gamma}{\mu};
\qquad
\tilde f(x)=\tfrac{L}{2}(x-\tilde a)^2,\ \ \tilde a=-\tfrac{\gamma}{L}.
\]
The corresponding global minimizers of $F_{\mathrm{rest}}+f$ and $F_{\mathrm{rest}}+\tilde f$ are
\[
\xs=\frac{(n-1)\mu\, b-\gamma}{(n-1)\mu+\mu},
\qquad
\tilde x^{\star}=\frac{(n-1)\mu\, b-\gamma}{(n-1)\mu+L}.
\]
Choose $b,\gamma$ so that the snapshot sits at distance $\varepsilon$ from the first instance's optimum: $|\xs-y|=|\xs|=\varepsilon$, i.e., $|(n-1)\mu b-\gamma|=\varepsilon\,((n-1)\mu+\mu)$. Then
\[
|\xs-\tilde x^{\star}|
=\big|(n-1)\mu b-\gamma\big|\cdot\frac{L-\mu}{((n-1)\mu+\mu)((n-1)\mu+L)}
=\frac{(L-\mu)\,\varepsilon}{(n-1)\mu+L},
\]
and $|\tilde x^{\star}-y|=\Theta(\varepsilon)$ as well. The two instances are indistinguishable to the algorithm (identical survivors, identical memory $M(0,\gamma)$), so its limit point is the same on both; on at least one instance the error is at least $\frac12|\xs-\tilde x^{\star}|=\frac{(L-\mu)\varepsilon}{2((n-1)\mu+L)}=\Omega\big(\frac{(\kappa-1)\varepsilon}{n+\kappa}\big)$. Embedding the construction in the first coordinate extends it to any $d$, and adding $\frac{\mu}{2}\norm{\cdot}^2$-type padding to the survivor pool extends it to any admissible $F_{\mathrm{rest}}$ spectrum. Comparing with the upper bound $\frac{(\kappa-1)\varepsilon}{2n}$ of Legacy-GT completes the proof; the gap is a single factor $\frac{n\kappa+\kappa^2}{n+\kappa}/n\le O(\kappa)$. \hfill$\blacksquare$

\begin{remark}
The restriction to snapshot-based memories is exactly the algorithm class of this paper. Removing it would require the adversary to remain consistent with the entire pre-departure trajectory; for quadratics, $d+1$ generic first-order queries identify the function, so trajectory-consistent lower bounds need non-quadratic perturbation families (bump constructions vanishing to first order on the trajectory). We conjecture the same $\Omega\big(\frac{(\kappa-1)\varepsilon}{n\kappa}\big)$ floor and leave it open.
\end{remark}

\textbf{Proof of Corollary \ref{cor:selective}.}
Let $B_m:=\norm{x^{\star}_m-\xs}$ and $\rho_m:=\norm{y_{i_m}-x^{\star}_{m-1}}$. Lemma~\ref{lem:drift} and the triangle inequality give $B_m\le B_{m-1}+\beta\rho_m$, hence
\begin{equation}\label{eq:compound}
B_k\ \le\ \beta\sum_{m=1}^{k}\rho_m .
\end{equation}
If instead one measures anchors against the \emph{true} optimum, $a_m:=\norm{y_{i_m}-\xs}$, then $\rho_m\le a_m+B_{m-1}$, so $B_m\le(1+\beta)B_{m-1}+\beta a_m$ and, unrolled, $B_k\le\beta\sum_m(1+\beta)^{k-m}a_m\le e^{k\beta}\beta\sum_m a_m\le 2\beta\sum_m a_m$ whenever $k\beta\le\frac12$. Also note for later use, $\rho_m\le a_m+B_{m-1}\le a_m+\beta\sum_{m'<m}\rho_{m'}$, which is inequality \eqref{eq:rho} of the main text once $a_m$ is bounded by the anchor rule. 

If departure $m$ chooses an anchored legacy, its contribution to $B_k$ is $2\beta\rho_m$ with $\rho_m$ bounded by the corresponding branch of \eqref{eq:rhobound}, as in A.\ref{app:theorem_proof}. If it chooses no legacy, then $\Fh_m=\Fh_{m-1}-h_{i_m}$, and
$\norm{\nabla\Fh_m(x^{\star}_{m-1})}=\norm{\nabla h_{i_m}(x^{\star}_{m-1})}\le\zeta_{i_m}+L_{i_m}\big(\norm{y}-\text{terms}\big)\le\hat\zeta_{i_m}+L_{i_m}B_{m-1}$,
while $\Fh_m$ is $(n-k)\mu$-strongly convex (only the surviving indices contribute), so the drift is at most $\frac{\hat\zeta_{i_m}}{(n-k)\mu}+O(\kappa B_{m-1})$; unrolling as in \eqref{eq:compound} under $k\beta\le\frac12$ (and $k\kappa/(n-k)$ bounded, absorbed in the constant) replaces the $m$-th summand accordingly. Since the rule selects the branch with the smallest certified bound, each summand is the minimum of the three. \hfill$\blacksquare$

\subsection{Proofs for Appendix~\ref{app:tech_lammas}} \label{app:tech_lemmas_proof}

\textbf{Proof of Lemma \ref{lem:mu}.}
By induction on $m$: each legacy issued in Alg.~\ref{alg:lgt} carries curvature $\eta_i=\frac{\mu_i+L_i}{2}+\eta_i^{\mathrm{led}}\ge\mu_i+\eta_i^{\mathrm{led}}$, and $\eta_i^{\mathrm{led}}$ is (inductively) at least the sum of $\mu_{i'}$ over all original indices $i'$ folded into node $i$'s inherited legacies. Hence in $\Fh_m$ every original index $i'\in[n]$ contributes at least $\mu_{i'}\ge\mu$ of curvature, either through its surviving $f_{i'}$ or through the curvature ledger of exactly one legacy chain. Summing, $\Fh_m$ is $\sum_{i'}\mu_{i'}\ge n\mu$-strongly convex. \hfill$\blacksquare$

\textbf{Proof of Lemma \ref{lem:drift}.}
$\Fh_m=\Fh_{m-1}-h_{i_m}+q_{i_m}$, hence, using $\nabla\Fh_{m-1}(x^{\star}_{m-1})=0$ and Proposition~\ref{prop:compose},
\[
\norm{\nabla\Fh_m(x^{\star}_{m-1})}
=\norm{\nabla q_{i_m}(x^{\star}_{m-1})-\nabla h_{i_m}(x^{\star}_{m-1})}
\le\tfrac{L-\mu}{2}\,\norm{y_{i_m}-x^{\star}_{m-1}}.
\]
By Lemma~\ref{lem:mu}, $\Fh_m$ is $n\mu$-strongly convex, and strong convexity yields, for any $x$,
$\norm{x-x^{\star}_m}\le\norm{\nabla\Fh_m(x)}/(n\mu)$ (take $x=x^{\star}_{m-1}$):
indeed $n\mu\norm{x-x^{\star}_m}^2\le\inner{\nabla\Fh_m(x)-\nabla\Fh_m(x^{\star}_m)}{x-x^{\star}_m}\le\norm{\nabla\Fh_m(x)}\norm{x-x^{\star}_m}$.
Combining the displays gives the claim with $\beta=\frac{L-\mu}{2n\mu}$. \hfill$\blacksquare$

\textbf{A.2.2\quad Proof of Lemma \ref{lem:jump} (boundary jump)}
Let node $i$ depart, $S'=S\setminus\{i\}$, $N'=N-1\ge1$, heir $a$, anchor $y$, curvature $\eta\le\hloc$, and new target $x^{\star}_{\mathrm{new}}$ with shift $\sigma:=\norm{x^{\star}_{\mathrm{new}}-x^{\star}_{\mathrm{ep}}}\le\beta\rho$, $\rho=\norm{y-x^{\star}_{\mathrm{ep}}}$ (Lemma~\ref{lem:drift}). We bound each term of $\Psi_{\mathrm{new}}$ (computed on $S'$, w.r.t.\ $x^{\star}_{\mathrm{new}}$) by absolute multiples of $\Psi_{\mathrm{old}}$.

\emph{Step 1: mean shift.} $\bar x'=\frac{N\bar x-x_i}{N'}=\bar x+\frac{\bar x-x_i}{N'}$, so
$\norm{\bar x'-\bar x}^2\le\frac{\norm{x_i-\bar x}^2}{N'^2}\le\frac{V_x}{N'^2}$.

\emph{Step 2: consensus term.} For $j\in S'$, $\norm{x_j-\bar x'}\le\norm{x_j-\bar x}+\norm{\bar x-\bar x'}$, hence
\[
V_x'\le 2V_x+2N'\norm{\bar x-\bar x'}^2\le 2V_x+\tfrac{2}{N'}V_x\le 4V_x .
\]

\emph{Step 3: optimality term and the anchor bound.} By Young's inequality,
\[
D'=\norm{\bar x'-x^{\star}_{\mathrm{new}}}^2\le 3D+3\norm{\bar x'-\bar x}^2+3\sigma^2
\le 3D+\tfrac{3}{N'^2}V_x+3\beta^2\rho^2 .
\]
If the anchor is the consensus iterate, $\rho^2=\norm{x_i-x^{\star}_{\mathrm{ep}}}^2\le 2D+2V_x$, and $\beta\le\frac12$ gives $3\beta^2\rho^2\le\frac32(D+V_x)$: absorbed. If the anchor is the local track $u_i^t$, then $\rho\le a+B_{m-1}$ with $a\le\frac{\zeta}{\mu}+\hat\varepsilon^{\mathrm{loc}}(t)$ (triangle inequality through $x_i^{\ast}$ and Assumption~\ref{as:fun}) --- this term does not shrink with $\Psi$ and is carried explicitly into the recursion of C.3 rather than absorbed here; formally, $D'\le 3D+\frac{3}{N'^2}V_x+3\beta^2(a+B_{m-1})^2$.

\emph{Step 4: tracker term.} Only the heir's tracker changes: $z_a'=z_a+z_i+\eta(x_a-y)$, and the survivors' mean becomes $\bar z'=\frac{N\bar z+\eta(x_a-y)}{N'}$ (using $\sum_{S'}z_j+z_i=N\bar z$). Writing $s:=\norm{x_a-y}\le\norm{x_a-\bar x}+\norm{\bar x-x^{\star}_{\mathrm{ep}}}+\rho\le\sqrt{V_x}+\sqrt{D}+\rho$ and expanding as in Step 2,
\[
V_z'\ \le\ 3V_z+3N'\Big(\tfrac{\norm{\bar z}+\eta s}{N'}\Big)^2+3\big(\norm{z_i-\bar z}+\norm{\bar z}+\eta s\big)^2
\ \le\ 12V_z+24\norm{\bar z}^2+24\,\eta^2 s^2 .
\]
It remains to bound $\norm{\bar z}$. By the tracking identity $N\bar z=\sum_j\nabla h_j(x_j)$ and $\nabla\Fh(x^{\star}_{\mathrm{ep}})=0$,
\[
N\norm{\bar z}\le\norm{\nabla\Fh(\bar x)}+\sum_j\norm{\nabla h_j(x_j)-\nabla h_j(\bar x)}
\le N\hloc\sqrt D+\hloc\sqrt{N V_x},
\]
so $\norm{\bar z}^2\le 2\hloc^2(D+V_x)$. Multiplying $V_z'$ by $c_2\alpha^2$ and using $\alpha\hloc\le c'$ (an absolute constant, from the stepsize rule) and $\eta\le\hloc$ shows $c_2\alpha^2 V_z'\le C\big(c_2\alpha^2V_z+D+V_x+\alpha^2\hloc^2\rho^2\big)$, with the $\rho^2$ term handled as in Step 3.

Collecting Steps 1--4: there is an absolute constant $C_J$ with
\begin{equation}\label{eq:jump}
\Psi_{\mathrm{new}}\ \le\ C_J\big(\Psi_{\mathrm{old}}+\beta^2 a^2\big),
\end{equation}
where $a=0$ for a consensus anchor (absorbed) and $a\le\frac{\zeta}{\mu}+\hat\varepsilon^{\mathrm{loc}}(t_m)$ for a local anchor. This proves Lemma~\ref{lem:jump} (with the explicit extra term for the local-anchor case). \hfill$\blacksquare$
\subsection{ Proofs for Section~\ref{sec:cvx} (Case II: Convex Objectives)} \label{app:con}

\textbf{Proof of Theorem \ref{thm:cvx}.}
\emph{(i)} Each survivor contributes curvature $\ge a_j=0$ and each legacy contributes exactly $\eta_{i_{m'}}\ge\frac{L_{i_{m'}}}{2}\ge\frac{\min_i L_i}{2}$ (its Hessian is $\eta I$); curvatures add, giving $\sigma_m=\sum_{m'\le m}\eta_{i_{m'}}\ge\frac{m}{2}\min_iL_i>0$. Proposition~\ref{fact:gt} then applies on every post-departure epoch with $\mu$ replaced by $\sigma_m/n$-type constants, yielding linear convergence.

\emph{(ii)} Let $\bar x^{\star}$ be the common minimizer of Assumption~\ref{as:common}. Since $\nabla F(\bar x^{\star})=0$,
$\norm{\nabla\Fh_k(\bar x^{\star})}\le\sum_m\omega_{i_m}\norm{\bar x^{\star}-y_m}\le\sum_m\tfrac{L_{i_m}}{2}\varepsilon_m$
by \eqref{eq:gradtransfer}, and $\sigma_k$-strong convexity of $\Fh_k$ gives
$\delta:=\norm{\hat x-\bar x^{\star}}\le\frac{\sum_m (L_{i_m}/2)\varepsilon_m}{\sigma_k}
\le\frac{\max_iL_i}{\min_iL_i}\cdot\frac{\sum_m\varepsilon_m}{k}=\tilde\kappa\,\bar\varepsilon$.
For the value bound, use $F=\Fh_k+\sum_m r_m$ and $\Fh_k(\hat x)\le\Fh_k(\bar x^{\star})$:
\[
F(\hat x)-F^{\star}\le\sum_m\big[r_m(\hat x)-r_m(\bar x^{\star})\big]
\le\sum_m\tfrac{\omega_{i_m}}{2}\big[\norm{\hat x-y_m}^2+\norm{\bar x^{\star}-y_m}^2\big]
\le\sum_m\tfrac{L_{i_m}}{4}\big[(\delta+\varepsilon_m)^2+\varepsilon_m^2\big]
\]
by \eqref{eq:valtransfer} and the triangle inequality $\norm{\hat x-y_m}\le\delta+\varepsilon_m$. Expanding $(\delta+\varepsilon_m)^2\le2\delta^2+2\varepsilon_m^2$ and inserting $\delta\le\tilde\kappa\bar\varepsilon$, $k\bar\varepsilon^2\le(\sum\varepsilon_m)^2/k$, $\sum\varepsilon_m^2\le(\sum\varepsilon_m)^2$:
\[
F(\hat x)-F^{\star}\le\frac{\max_iL_i}{4}\Big[2k\tilde\kappa^2\bar\varepsilon^2+3\sum_m\varepsilon_m^2\Big]
\le\frac{(2\tilde\kappa^2+3)\max_iL_i}{4}\cdot\frac{(\sum_m\varepsilon_m)^2}{k}\,,
\]
where the last step used $\sum_m\varepsilon_m^2\le(\sum_m\varepsilon_m)^2/1$ and absorbed the weaker of the two bounds; for $k=1$ the two coincide. \hfill$\blacksquare$

\textbf{Proof of Proposition \ref{prop:certC}.}
(a) is the definition of QG applied at $y_m$, using $F(y_m)-F^{\star}\le G(t_m)$ (the anchor is an iterate covered by the certificate; if a Polyak average is used, convexity gives the same value bound). For (b), write $\bar x^{\star}=\Pi_{X^{\star}}(y_k)$; by the triangle inequality and nonexpansiveness of the projection onto the convex set $X^{\star}$,
\[
\norm{y_m-\bar x^{\star}}\le\norm{y_m-\Pi_{X^{\star}}(y_m)}+\norm{\Pi_{X^{\star}}(y_m)-\Pi_{X^{\star}}(y_k)}
\le\gamma_m+\norm{y_m-y_k},
\]
and both terms are certified: the first by (a), the second by direct observation. (c) is (a) applied to $f_i$ and its own gradient-descent value certificate. \hfill$\blacksquare$

\end{document}